\pgfplotsset{compat=1.18}
\newtheorem{thm}{Theorem}[section]
\newtheorem{prop}[thm]{Proposition}
\newtheorem{lem}[thm]{Lemma}
\newtheorem{quest}[thm]{Question}
\theoremstyle{definition}
\newtheorem{defn}[thm]{Definition}
\newtheorem{exmp}[thm]{Example}
\theoremstyle{remark}
\newtheorem{rem}[thm]{Remark}
\let\c@equation\c@thm
\numberwithin{equation}{section}
\DeclareMathOperator{\Sing}{Sing}
\DeclareMathOperator{\coreg}{coreg}
\renewcommand{\P}[1]{\ensuremath{\mathbb{P}^{#1}}}
\newcommand{\sfrac}[2]{\left(\frac{#1}{#2}\right)}
\newcommand{\pfrac}[2]{\frac{\partial#1}{\partial#2}}
\renewcommand{\phi}{\varphi}
\renewcommand{\leq}{\leqslant}
\renewcommand{\geq}{\geqslant}
\definecolor{utahcrimson}{rgb}{0.83, 0.0, 0.25}
\definecolor{emerald}{rgb}{0.31, 0.78, 0.47}
\title{On smooth Fano threefolds with coregulatiry zero}
\author{Olzhas Zhakupov}
\address{Moscow Institute of Physics and Technology,
9 Institutskiy per., Dolgoprudny, Moscow Region, 141701, Russia}
\email{zhakupov.ob@phystech.edu}
\begin{document}
\begin{abstract}
We provide examples of smooth three-dimensional  Fano complete intersections of dergee $2, 4, 6$, and $8$ that have coregularity $0$. Considering the main theorem of \cite{ALP23} on the remaining $101$ families of smooth Fano threefolds, our result implies that each family of smooth Fano threefolds has an element of coregularity zero.  
\end{abstract}
\maketitle
\tableofcontents
\section{Introduction}

 All varieties in this text are considered over $\mathbb{C}$. 
 
 A projective variety $X$ is called \textit{Fano} if its anticanonical bundle $\omega_X^{\vee}$ is ample. Fano varieties play an important role in the birational classification of algebraic varieties. They appear as the fibres of Mori fibre spaces, which are the end result of a minimal model program for a variety covered by rational curves.
  Koll\'ar, Miyaoka and Mori proved in \cite{KMM92} that smooth Fano varieties form finitely many deformation families in every dimension.
  For example, any Fano curve is isomorphic to $\P{1}$. The smooth Fano surfaces are called del Pezzo surfaces. They form $10$ families and are classified according to the degree of the anti-canonical divisor  $(-K_X)^2$ which is an integer in the set $\{1,\ldots,9\}$.  Iskovskikh, Mori and Mukai obtained the classification of smooth Fano threefolds in $105$ deformation families, see \cite{IP99}. Finding smooth elements in the anti-canonical linear system $\left|-K_X\right|$ played a fundamental role in the latter classification. In general, in order to study Fano varieties, it is helpful to understand how singular such elements could be.  The notion of \textit{coregularity} $\coreg(X)$, inspired by \cite{Sh00} and developed in \cite{Mo22} allows to ``measure" this concept on a Fano variety $X$. The coregularity is defined in terms of Calabi-Yau pairs (see Definition \ref{defn:2.1}). Given a Fano variety X, one can get a Calabi-Yau pair, by taking an \textit{$l$-complement} of $K_X$, that is, a boundary $\mathbb{Q}$-divisor $D$ such that $l(K_x + D) \sim 0$. The \emph{dual complex} $\mathcal{D}(X,D)$ (see Definition \ref{defn:2.3}) of a Calabi-Yau pair is a topological space that reflects the combinatorics of the boundary divisor $D$ as well as the geometry of $X$.
 We say that a Calabi-Yau pair $(X,D)$ has maximal intersection, if its dual complex has dimension equal to $\dim X - 1$. In terms of coregularity, this means that $\coreg(X)=0$. Such Fano varieties are far from the exceptional ones, which have $\coreg(X) = \dim X$. Being exceptional here means that any lc log Calabi-Yau structure is in fact klt. 
 
 It is expected that ``most'' Fano varieties have coregularity zero. For example, $\coreg (\P{1}) = 0$. Further, any smooth del Pezzo surface of degree at least ~$2$ has coregularity zero (\cite[Theorem 3.2]{Mo22}) and a general del Pezzo surface of degree $1$ also has coregularity zero (\cite[Proposition 2.3]{ALP23}). The case of the smooth Fano threefolds was thoroughly studied in \cite{ALP23} (see Theorem \ref{thm:2.5}). It turned out that for $100$ out of $105$ families of smooth Fano threefolds, a general element has coregularity zero. For $92$ out of these $100$ families, any element has coregularity zero. The remaining 5 families are $1.1, 1.2, 1.3, 1.4$ and 1.5 in the notation of  \cite{IP99}. 
 In \cite[Lemma 6.1]{ALP23} it is shown that there exist a smooth Fano threefold from the family $1.5$ of coregularity zero. The family 1.2 is the family of smooth quartics $X \subset \P{4}$. 
 A.-S. Kaloghiros in \cite{Ka18} provides the example of a smooth quartic of coregularity zero:
\begin{thm}[{\cite{Ka18}}] \label{thm:1.1}
    There exist a smooth quartic $X \subseteq \P{4}$ of coregularity zero.     
\end{thm}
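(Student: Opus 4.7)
The plan is to construct an explicit smooth quartic $X \subseteq \mathbb{P}^4$ together with an effective divisor $D \sim -K_X$ such that $(X, D)$ is a log canonical log Calabi--Yau pair possessing a $0$-dimensional log canonical center; this forces $\dim \mathcal{D}(X, D) = 2 = \dim X - 1$, which is exactly the condition $\coreg(X) = 0$. Since $-K_X$ is the class of a hyperplane section on a smooth quartic, it is natural to take $D = X \cap \Pi$ for a well-chosen hyperplane $\Pi \subset \mathbb{P}^4$.

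The strategy is to engineer a point of higher-order tangency between $X$ and $\Pi$ at some $p \in X$, and to arrange that the hyperplane section $D$ acquires an ordinary triple point at $p$. Concretely, set $p = [1:0:0:0:0]$, $\Pi = \{x_4 = 0\}$, and work in affine coordinates $(y_1, y_2, y_3, y_4) = (x_1/x_0, x_2/x_0, x_3/x_0, x_4/x_0)$. I would impose on the defining quartic $F$ the linear conditions that the coefficients of $x_0^4$, of $x_0^3 x_i$ for $i = 1, 2, 3$, and of $x_0^2 x_i x_j$ for $i, j \in \{1, 2, 3\}$ all vanish, while the coefficient of $x_0^3 x_4$ is nonzero and the cubic cut out on $\Pi$ by the $x_0 x_i x_j x_k$-coefficients factors as a product of three linearly independent linear forms $\ell_1 \ell_2 \ell_3$. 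Then $X$ is smooth at $p$ with $T_p X = T_p \Pi$, and by the implicit function theorem the local analytic equation of $D$ on $X$ takes the form $\ell_1 \ell_2 \ell_3 + O(|y|^4) = 0$; a formal change of coordinates reduces this to $(\mathbb{A}^3, \{z_1 z_2 z_3 = 0\})$, which is snc with a $0$-dimensional lc center at $p$, contributing a $2$-simplex to the dual complex.

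The main obstacle is verifying global smoothness. The above conditions cut out a linear subspace $V$ inside the $70$-dimensional space of quartic forms; I would show that the set-theoretic base locus of $V$ inside $\mathbb{P}^4$ consists only of $p$ (a direct monomial calculation, since any point outside $p$ has some $x_i \neq 0$ with $i \geq 1$, and the quartic monomials not involving $x_0$ to a high power remain unconstrained in $V$), so that by Bertini a general $F \in V$ has its singular locus contained in $\{p\}$. Since smoothness at $p$ is already guaranteed by the non-vanishing of the $x_0^3 x_4$-coefficient, a general $F \in V$ defines a smooth threefold with the required local snc structure at $p$. An explicit equation in $V$ can then be written down and its smoothness checked by a direct Jacobian computation, completing the construction and showing $\coreg(X) = 0$.
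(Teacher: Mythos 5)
Your global strategy (a hyperplane section $D=X\cap\Pi$ acquiring a distinguished point $p$ that forces $\dim\mathcal{D}(X,D)=2$) is the right one and is essentially what the paper does in Section~4, but the key local step is wrong. From your normalization the local equation of $D$ inside the smooth threefold $X$ at $p$ is $\ell_1\ell_2\ell_3+O(|y|^4)=0$, and you claim a formal change of coordinates reduces this to $z_1z_2z_3=0$. It does not: $\{z_1z_2z_3=0\}$ is a non-isolated, non-normal singularity, while for a general member of your family the degree-$4$ tail contains terms like $y_1^4+y_2^4+y_3^4$ that cannot be absorbed, and the resulting germ is the \emph{isolated normal cusp singularity} $T_{4,4,4}$ --- this is precisely the content of the paper's Lemma~\ref{lem:2.2}, which identifies $xyz+x^p+y^q+z^r+g_{\geq 4}$ as a $T_{p,q,r}$ cusp, not as an snc point. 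So $(X,D)$ is not snc at $p$ and you get no $2$-simplex for free. Nor can you escape by also killing the quartic terms in $x_1,x_2,x_3$ so that $D$ becomes a genuine union of four planes: then $F=x_0\ell_1\ell_2\ell_3+x_4G$ with $G$ a cubic, and along each double line of $D$ all partials of $F$ except $\partial F/\partial x_4=G$ vanish, while $G$ restricted to such a line is a cubic form on $\mathbb{P}^1$ and hence has zeros --- at which $X$ is singular. A smooth quartic simply cannot carry an snc hyperplane section of this shape, which is why the cusp route is unavoidable.

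The missing work is exactly what the paper (following Kaloghiros) supplies: identify $p\in D$ as a $T_{p,q,r}$ cusp (the paper uses $T_{3,3,4}$ for its quartic and $T_{4,4,4}$ in the other families), check that $(X,D)$ is lc by inversion of adjunction since cusps are strictly lc surface singularities, and then blow up $p$ in $X$ so that the exceptional divisor restricted to the strict transform of $D$ realizes the minimal resolution of the cusp, whose exceptional locus is a cycle of rational curves (or a nodal rational curve); the resulting dlt pair has a zero-dimensional stratum, which is what actually yields $\dim\mathcal{D}(X,D)=2$ (cf.\ Lemma~\ref{lem:4.3} and the proof of Proposition~\ref{prop:3_3}). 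You also need to verify lc-ness and control the singularities of $D$ away from $p$, and note that your family $V$ is not a linear system (the factorization condition on the cubic part is not linear), so the Bertini step needs to be phrased for the translate of a linear system with the cubic part fixed. As written, the proposal establishes at best that a general such quartic has a hyperplane section with a $T_{4,4,4}$ point; the conclusion $\coreg(X)=0$ requires the resolution analysis above.
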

Our main result is as follows:
\begin{thm} \label{thm:1.2}
There exist a smooth sextic double solid $X \subseteq \mathbb{P}(1,1,1,1,3)$, a smooth intersection of a quadric and a cubic $X_{2, 3} \subseteq \P{5}$ and a smooth intersection of three quadrics $X_{2,2,2} \subseteq \P{6}$ of coregularity zero. 
\end{thm}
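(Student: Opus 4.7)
The plan is to exhibit, for each of the three families listed in the theorem, a specific smooth member $X$ together with an effective $\mathbb{Q}$-divisor $D$ such that $(X, D)$ is a log canonical Calabi--Yau pair with a zero-dimensional log canonical center. Since $\dim X = 3$, the existence of such a center forces $\dim \mathcal{D}(X, D) = 2$ and hence $\coreg(X) = 0$. In each case I would engineer $D$ to be of the form $D_1 + D_2 + D_3$ (possibly completed by an auxiliary effective $\mathbb{Q}$-divisor $\Delta$ supported away from the triple point) where $D_1, D_2, D_3$ are three irreducible surfaces with coefficient $1$ meeting pairwise in curves and all three transversally at a single point $p \in X$, so that $(X, D)$ is snc at $p$ and $p$ is the desired $0$-dimensional lc center.

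The common construction principle is to choose the defining equations of $X$ so that a suitable member of $|-K_X|$ (for the complete intersections) or of $|-\ell K_X|$ (for the double solid) becomes maximally reducible near a prescribed point. For $X_{2,2,2} \subseteq \P{6}$, the idea is to fix a hyperplane $H \subseteq \P{6}$ and specialize the three defining quadrics $Q_1, Q_2, Q_3$ so that each $Q_i|_H$ has rank at most $2$; successive rank drops allow $X \cap H$ to decompose into surface strata, and three of these strata through a common point serve as $D_1, D_2, D_3$. For $X_{2,3} \subseteq \P{5}$, I take a hyperplane $H$ on which the quadric $Q$ restricts to a rank $2$ quadric $\Lambda_1 \cup \Lambda_2$ of $H$, and choose the cubic $C$ so that $C|_{\Lambda_1}$ factors further as a plane plus a quadric surface; the section $X \cap H$ then decomposes into three surface components, arranged to share a common point $p$. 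For the sextic double solid $X = \{y^2 = f_6\} \subseteq \mathbb{P}(1,1,1,1,3)$ one correspondingly chooses the branch sextic to be tangent in a prescribed multiplicity-$2$ fashion along several hyperplanes through a common center in $\P{3}$, so that the preimages split on $X$ and three of the split halves meet above the chosen center.

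Granting the construction, three verifications remain in each family: that $X$ is smooth, that the three components $D_i$ form an snc divisor at $p$ (a tangent-space computation), and that $D_1 + D_2 + D_3$ can be completed by an auxiliary effective $\mathbb{Q}$-divisor $\Delta$ with $\supp \Delta \not\ni p$ so that $D_1 + D_2 + D_3 + \Delta \sim_{\mathbb{Q}} -K_X$. The class-balancing exploits the fact that in $\Pic(X) = \mathbb{Z} \langle -K_X \rangle$ the ``leftover'' components of the reducible anticanonical member provide exactly the correction needed, while the snc check at $p$ reduces to showing that the three hyperplanes (or their analogues) whose pullbacks give the $D_i$ meet transversally on $X$.

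The main obstacle I expect is the smoothness verification. Each reducibility condition imposed on the defining equations creates a candidate singular locus: for the weighted sextic double solid, the curve along which the branch sextic is tangent to a hyperplane (where the partial in the normal direction a priori vanishes); for the complete-intersection cases, the excess-intersection loci produced by the rank drops of the restricted quadrics and the reducibility of the cubic. The work is to choose the remaining free coefficients in $f_6$, respectively in $Q, C$ and in $Q_1, Q_2, Q_3$, so that the Jacobian criterion holds at every point of these candidate loci --- equivalently, so that the various residual polynomials produced by the Jacobian have no common zeros on the relevant strata. This is a finite polynomial check in each family, and the substance of the proof is executing it explicitly, family by family, to certify that a smooth member satisfying all the prescribed reducibility conditions does exist.
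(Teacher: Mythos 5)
There is a genuine gap, and it sits at the heart of your construction rather than in the verifications you postpone. All three threefolds in the statement are smooth Fano threefolds of Picard rank one and index one, so $\Pic(X)\cong\mathbb{Z}\cdot(-K_X)$ and every nonzero effective divisor is linearly equivalent to $-aK_X$ with an integer $a\geq 1$. Consequently a boundary $D$ with $K_X+D\sim_{\mathbb{Q}}0$ can contain at most one prime divisor with coefficient one: three coefficient-one surfaces $D_1+D_2+D_3$ (plus an effective $\Delta$) would have class at least $-3K_X$. For the same reason every member of $|-K_X|$ is irreducible and reduced, so the hyperplane section of $X_{2,3}\subseteq\P{5}$ or $X_{2,2,2}\subseteq\P{6}$ cannot decompose into surface strata, and on the sextic double solid the preimage of a plane cannot split into two halves while $X$ stays smooth (a splitting would produce a divisor of class $\tfrac12(-K_X)$). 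Hence the snc triple point you want to engineer on $X$ itself does not exist for any smooth member of these families; passing to $|-\ell K_X|$ and rescaling does not help, since the coefficient-one part of the resulting $\mathbb{Q}$-divisor runs into the same class computation.

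The zero-dimensional lc centre must therefore be produced on a birational model, by exceptional divisors of log discrepancy zero, i.e.\ by a sufficiently degenerate singularity of an \emph{irreducible} anticanonical surface. This is exactly what the paper does: in each case it writes down explicit equations (smoothness being checked by Gr\"obner-basis computations, which is essentially the only part of your plan that survives) so that a specific anticanonical member --- the preimage of a plane for the double solid, a hyperplane section for $X_{2,3}$ and $X_{2,2,2}$ --- acquires a cusp singularity of type $T_{2,4,6}$, $T_{3,3,4}$ or $T_{4,4,4}$, together with some du Val points. The pair $(X,D)$ is lc by inversion of adjunction and Calabi--Yau since $D\in|-K_X|$, and explicit (weighted) blow-ups resolving the cusp yield a dlt log pullback whose boundary has a zero-dimensional stratum, because the minimal resolution of such a cusp is a cycle of rational curves; this gives $\dim\mathcal{D}(X,D)=2$ and hence $\coreg(X)=0$. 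To repair your argument you would have to abandon the reducible-boundary construction and instead aim for this kind of non-klt singularity on an irreducible anticanonical divisor.
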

Smooth Fano threefolds from the Theorem \ref{thm:1.2} belong to the families 1.1, 1.3, 1.4, respectively. Combining Theorem \ref{thm:1.1} and Theorem \ref{thm:1.2} with the main result of \cite{ALP23} -- Theorem \ref{thm:2.5}, we get:
\begin{thm} \label{thm:1.3}
In any family of smooth Fano threefolds, there exists an element of coregularity zero.
\end{thm}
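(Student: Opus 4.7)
The plan is purely combinatorial: partition the $105$ deformation families of smooth Fano threefolds in the Iskovskikh--Mori--Mukai classification into subsets for which coregularity-zero elements are already known, and check that these subsets exhaust the list.

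First I would invoke Theorem \ref{thm:2.5} (the main result of \cite{ALP23}) to dispose of $100$ of the $105$ families at once. That theorem provides, for each such family, either a general element or at least one element whose coregularity equals zero, so in particular each of those families contains such an element. The only families not covered are $1.1, 1.2, 1.3, 1.4$ and $1.5$ in the notation of \cite{IP99}, and these must be handled one by one.

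Next, I would cite the three independent inputs that address the five residual families. Family $1.5$ is settled by \cite[Lemma 6.1]{ALP23}, which exhibits a smooth member of coregularity zero. Family $1.2$, consisting of smooth quartic threefolds in $\P{4}$, is settled by Kaloghiros' construction recorded as Theorem \ref{thm:1.1}. Finally, families $1.1, 1.3, 1.4$ -- sextic double solids in $\mathbb{P}(1,1,1,1,3)$, $(2,3)$-complete intersections in $\P{5}$, and $(2,2,2)$-complete intersections in $\P{6}$ -- are precisely the three cases produced by Theorem \ref{thm:1.2}. Assembling these four inputs yields a coregularity-zero member in every one of the $105$ families, which is the desired statement.

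Since the genuine geometric work is entirely encapsulated in Theorems \ref{thm:1.1}, \ref{thm:1.2}, \ref{thm:2.5} and \cite[Lemma 6.1]{ALP23}, there is no real obstacle at this stage; the only item requiring care is the bookkeeping check that the family labels $1.1, 1.2, 1.3, 1.4, 1.5$ in \cite{IP99} are indeed exactly the families excluded from Theorem \ref{thm:2.5} and that the three target spaces in Theorem \ref{thm:1.2} correspond respectively to the labels $1.1, 1.3, 1.4$. Once this matching is verified, the proof is a one-line combination of the cited results.
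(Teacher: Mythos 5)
Your proposal is correct and coincides with the paper's own argument: Theorem \ref{thm:1.3} is obtained there exactly by combining Theorem \ref{thm:2.5} (covering $100$ families), \cite[Lemma 6.1]{ALP23} for family $1.5$, Theorem \ref{thm:1.1} for family $1.2$, and Theorem \ref{thm:1.2} for families $1.1$, $1.3$, $1.4$. The bookkeeping of labels you flag is indeed the only thing to check, and it matches the paper.
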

As a possible continuation of studying the coregularity of smooth Fano varieties, it is interesting to understand whether the behavior of Theorem \ref{thm:1.3} extends to higher dimensions:
\begin{quest}
    Does every deformation family of smooth $n$-dimensional Fano varieties contain an element of coregularity zero?
\end{quest}

\textbf{Acknowledgements. }I am grateful to my advisor Konstantin Loginov for suggesting this problem as well as for his patience and invaluable support. 
\section{Preliminaries}

In what follows, we use the standard notions of birational geometry, see \cite{KM98}. We remind that all varieties are considered over $\mathbb{C}$.

\subsection{Pairs and singularities}

A \textit{pair} $(X, D)$ consists of a normal variety $X$ and a $\mathbb{Q}$-Weil divisor $D$ with coefficients in $[0,1]$ such that $K_X + D$ is $\mathbb{Q}$-Cartier. The divisor $D$  is called a \textit{boundary} divisor. If $f\colon Y \to X$ is a birational morphism, then a \textit{log pullback} $(Y, D_Y)$ of a pair $(X, D)$ is defined by the following formulas:
\[
K_Y + D_Y = f^{*}(K_X + D), \quad \quad \quad \quad f_*(D_Y)=D.
\]
In this case we will also say that $D_Y$ a log pullback of $D$. Note that generally a log pullback $(Y, D_Y)$ is not a pair since the coefficients of $D_Y$ may not belong to $[0,1]$.  
For a divisor $E \subset Y$ its \textit{log discrepancy}  $a(E, X, \Delta)$ is defined as 
\[a(E, X, D) = 1 - \mathrm{coeff}_{E}D_Y.\] 
A pair $(X, D)$ is called \textit{log canonical} or, shortly, \textit{lc} if $a(E, X, D) \geq 0$ for every divisor $E$ and every birational morphism $ f \colon Y \to X$. A pair $(X, D)$ is called \textit{Kawamata log terminal} or, shortly, \textit{klt} if $a(E, X, D) > 0$ for every divisor $E$ and every birational morphism $ f \colon Y \to X$. A pair $(X, D)$ is called \textit{divisorially log terminal} or, shortly, dlt if $a(E, X, D) > 0$ for every $f$-exceptional divisor $E$ and for some log-resolution $ f \colon Y \to X$. 
A birational morphism $f \colon Y \to X$ is called a \textit{log resolution} of the pair $(X,D)$ if $Y$ is smooth and $\mathrm{Exc}(f)\cup \mathrm{supp}(f^{-1}_*D)$ is a simple normal crossing divisor. Two pairs $(X_1, D_1),\, (X_2, D_2)$ are called \textit{crepant equivalent} if there are proper birational morphisms $f_{i}\colon Y \to X_{i}$, $i=1,2$ such that the log pullback of $D_1$ is equal to the log pullback of $D_2$. 

We use the following definition of a Calabi-Yau pair:

\begin{defn} \label{defn:2.1}
    An lc pair $(X, D)$ is called a \textit{Calabi-Yau pair} if $K_{X} + D \sim_{\mathbb{Q}} 0$.    
\end{defn} 
\begin{exmp} \label{exmp:2.2}
    Let $X = \P{2}$ and $D = L + C$ be the union of a line and a conic intersecting transversally. Then such pair is lc and $K_X + D \sim_{\mathbb{Q}} 0$. 
\end{exmp}
\subsection{Dual complex and coregularity}

Suppose $X$ is a normal variety and a divisor $D = \sum_{i\in I} D_i$ on it has a simple normal crossing support. The \textit{dual complex} $\mathcal{D}(D)$ is  a regular CW-complex defined as follows. The irreducible components $Z$ of the intersection $\bigcap_{i \in J} D_i,\,J \subseteq ~I$, $|J| = k + 1$ are in bijection with $k$-simplices $v_Z$. For $j \in J$, $Z$ is contained in some unique irreducible component $W$ of the intersection $\bigcap_{i \in J \setminus \{j\}} D_i$. This inclusion gives a gluing map $v_W \to v_Z$ which is an inclusion of the face that doesn't contain the vertex $v_j$. The dimension of a dual complex is its dimension as a CW-complex. 

We denote the  sum of components which have a coefficient $1$ in a divisor $D$ as $D^{=1}$. 
\begin{defn} \label{defn:2.3}
    A \textit{dual complex} $\mathcal{D}(X,D)$ of a Calabi-Yau pair $(X,D)$ is defined as $\mathcal{D}(X,D) = \mathcal{D}(D_Y^{=1})$, where $f\colon (Y,D_Y) \to (X,D)$ is a log resolution. 
\end{defn}
\begin{rem}
    It is proved in \cite{dFKX17} that the PL homeomorphism class of the dual complex $\mathcal{D}(X,D)$ does not depend on the choice of a log resolution and, more generally, does not change under a crepant equivalence.    
\end{rem} Note that $\dim \mathcal{D}(X,D)$ is not greater than  $\dim X - 1$, since an snc divisor has intersections of at most $\dim X$ components at every point.  
\begin{exmp}
    Suppose $(X,D)$ is a Calabi-Yau pair from the Example \ref{exmp:2.2}. Since $X$ is smooth and $D = L + C$ is snc, the dual complex $\mathcal{D}(X, D) \cong S^1$ (Fig. \ref{fig:2.1}).
\end{exmp}
\begin{figure}[h!]
\centering
\begin{tikzpicture}[scale = 0.5]
\draw[black] (0,0) circle (2);
\filldraw[black] (2,0) circle (2.5pt);
\filldraw[black] (-2,0) circle (2.5pt);
\draw node at (-2.5, -0.5){$v_L$};
\draw node at (2.5, -0.5){$v_C$};
\draw node at (0, 2.5){$v_1$};
\draw node at (0, -2.5){$v_2$};
\end{tikzpicture}
\caption{Dual complex of a pair $(\P{2},L+C)$}
\label{fig:2.1}
\end{figure}
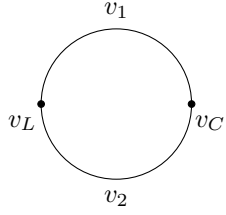
The notion of \textit{regularity} $\mathrm{reg}(X, D)$ of an lc pair was introduced in \cite{Sh00}:
\[\mathrm{reg}(X,D) = \dim \mathcal{D}(X, D).\]
In the case $\mathcal{D}(X, D) = \varnothing$ we say by convention that $\dim \varnothing = -1$.
The notion of regularity of a normal projective variety $X$ was defined in \cite{Mo22}. By definition, it is equal to
\begin{equation*}
    \label{reg}\mathrm{reg}(X) = \mathrm{max}\,\{\dim\mathcal{D}(X,D)\,|\,(X,D) \text{ is a Calabi-Yau pair} \}.
\end{equation*}
The dual notion of \textit{coregularity} is defined by the following formula:
\[\mathrm{coreg}(X) = \dim X - \mathrm{reg}(X) - 1.
\]
The main result of \cite{ALP23} about the coregularity of the smooth Fano threefolds is as follows:
\begin{thm}[{\cite[Theorem 0.1]{ALP23}}] \label{thm:2.5}
    Let $X$ be a smooth Fano threefold. Put
    \begin{align*}
    &\aleph = \{1.1,\,1.2\}, \qquad \beth = \{1.3,\,1.4\},\\
    &\gimel = \{1.5\}, \qquad\qquad \daleth = \{1.6,\, 1.7,\,1.8,\,1.9,\,1.10,\,1.11,\,2.1,\,10.1\}.
    \end{align*}
    Then the following holds.
    \begin{enumerate}
        \item  If $X$ is any smooth threefold that belongs to any family except for the families in $\aleph,\, \beth,\,\gimel,\,\daleth,$ then $\coreg(X) = 0$.
        \item  If $X$ is a general element in one of the families $\aleph$, we have $\coreg(X) \geq 1$.
        \item If $X$ is a general element in one of the families $\beth$, we have $\coreg_1(X) = 2$.
        \item If $X$ is a general element in  the family $\gimel$, we have $\coreg(X) \leq 1$.
        \item If $X$ is a general element in one of the families $\daleth$, we have $\coreg(X) = 0$.
    \end{enumerate}
\end{thm}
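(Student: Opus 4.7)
My plan is to treat the three families separately by giving, in each case, an explicit smooth Fano threefold $X$ and an anticanonical boundary $D$ for which a direct log resolution yields $\dim \mathcal{D}(X, D) = 2$. The recipe is the same in each case: choose the defining equations so that on a distinguished hyperplane $H = \{x_0 = 0\}$ (respectively, on a plane in $\P{3}$ for the double solid) they factor into simple pieces, making $D$ highly reducible; the freedom in the terms divisible by $x_0$ is then used to keep $X$ smooth.

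For family $1.1$, I would take $X : y^2 = (\ell_1 \ell_2 \ell_3)^2 + x_0 h_5$ in $\mathbb{P}(1,1,1,1,3)$, where $\ell_1, \ell_2, \ell_3$ cut out a triangle $T$ in $H \cong \P{2}$ and $h_5$ is a generic quintic. Then $D = \pi^{-1}(H) = D_1 + D_2$ splits into two copies of $H$ meeting along $T$, tangent at each of the three nodes of $T$. A two-step log resolution at each node --- first blow up the node, then the curve along which the strict transforms still coincide --- produces coefficient-$1$ exceptional divisors $E_{2,j}$, and a local weighted-homogeneity argument ($y^2 = c^2 \ell_1^2 \ell_2^2$ has weights $(1,1,2)$ and degree $4$, giving $\mathrm{lct} = 1$) confirms log canonicity. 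Local chart computations at the three pairs of triple intersection points $\tilde D_1 \cap \tilde D_2 \cap E_{2,j}$ give snc there, and a direct count ($5$ vertices, $9$ edges, $6$ triangles, every edge in two triangles) shows the dual complex is a closed $2$-manifold homeomorphic to $S^2$.

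For family $1.3$, take $Q = q_2 + x_0 L$ and $C = \ell_1 \ell_2 \ell_3 + x_0 M$ in $\P{5}$, where $q_2$ is a smooth quadric in $x_1, \ldots, x_5$, the $\ell_i$ are three general hyperplane equations in the same variables, and $L, M$ are generic. Then $X \cap H = S_1 \cup S_2 \cup S_3$, where $S_i = q_2 \cap \{\ell_i = 0\}$ is a smooth quadric surface, the three meeting pairwise along smooth plane conics and triply at the two points of $q_2 \cap \{\ell_1 = \ell_2 = \ell_3 = 0\}$. At each triple point, smoothness of $q_2$ and non-vanishing of $M$ let one eliminate two coordinates via the implicit function theorem, reducing the boundary locally to $\{xyz = 0\} \subset \A{3}$, so the pair is snc and log canonical; the dual complex ($3$ vertices, $3$ edges, $2$ triangles) is again $S^2$.

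For family $1.4$, take $Q_1 = \ell_1 \ell_1' + x_0 L_1$ and $Q_i = g_i + x_0 L_i$ for $i = 2, 3$ in $\P{6}$, where $g_2$ is a quadric whose restriction to $\P{3} = \{\ell_1 = \ell_1' = 0\}$ is rank-$2$, namely $g_2|_{\P{3}} = \pi_a \pi_b$, and $g_3|_{\P{3}} = q$ is a smooth quadric. Then $X \cap H = E_1 \cup E_2$ is the union of two smooth del Pezzo surfaces of degree $4$, $E_i = \{g_2 = g_3 = \ell_1^i = 0\}$, meeting along the quartic curve $B = (\pi_a \cap q) \cup (\pi_b \cap q)$ --- two smooth conics with two common nodes at $\pi_a \cap \pi_b \cap q$. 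The principal obstacle is verifying smoothness of $X$ at these two nodes: $\nabla Q_1$ is forced into the $e_0$-direction, but $\nabla g_2$ (which vanishes to order $2$ along $\pi_a \cap \pi_b$) contributes independent directions $d\ell_1, d\ell_1'$, while $\nabla g_3$ acquires an additional component from $\nabla q|_{\P{3}}$, so the Jacobian of $(Q_1, Q_2, Q_3)$ at each node has rank $3$ and $X$ is smooth. The same two-step local resolution as in family $1.1$ then gives a log resolution with $4$ vertices, $6$ edges and $4$ triangles in the dual complex; with every edge in two triangles this is again homeomorphic to $S^2$, giving $\coreg(X) = 0$.
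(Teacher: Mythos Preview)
Your proposal does not address the stated Theorem~\ref{thm:2.5} at all. That theorem is a result of \cite{ALP23}, quoted here without proof; it concerns \emph{all} (or \emph{general}) members of each family and asserts, for instance, that a general $X$ in families $1.1,1.2$ has $\coreg(X)\geq 1$ and that a general $X$ in families $1.3,1.4$ has $\coreg_1(X)=2$. What you have written is instead a proof sketch for Theorem~\ref{thm:1.2}: the construction of \emph{particular} smooth members of families $1.1$, $1.3$, $1.4$ with $\coreg(X)=0$. Nothing in your argument touches items (1)--(5) of Theorem~\ref{thm:2.5}.

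That said, as an approach to Theorem~\ref{thm:1.2} your strategy is genuinely different from the paper's and worth comparing. The paper produces in each case an \emph{irreducible} anticanonical divisor $H$ carrying a cusp singularity of type $T_{p,q,r}$ (namely $T_{2,4,6}$ for the double solid and $T_{4,4,4}$ for $X_{2,3}$ and $X_{2,2,2}$); the $2$-dimensional stratum of the dual complex appears only after one or more blow-ups resolving the cusp, and the argument leans on Lemma~\ref{lem:2.2} and the classification of $T_{p,q,r}$ resolutions. Your approach instead arranges the equations so that the anticanonical section is \emph{reducible} from the start (two copies of $\P{2}$ for the double solid, three quadric surfaces for $X_{2,3}$, two del Pezzo quartics for $X_{2,2,2}$), with the components already meeting in a configuration close to snc; the dual complex is then read off combinatorially after a mild local resolution. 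This is more elementary---no cusp-singularity theory is needed---but it shifts the burden onto verifying smoothness of $X$ with these degenerate hyperplane sections, and onto checking snc at the triple points; your sketch handles the $X_{2,3}$ case cleanly via the implicit function theorem, while the $X_{2,2,2}$ case (where you must force $g_2|_{\P{3}}$ to have rank~$2$ yet keep both del Pezzo components and $X$ itself smooth) is the most delicate and would need a fully worked genericity argument or an explicit example, as the paper supplies for its construction.
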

\begin{rem}
    The notion $\mathrm{coreg}_1(X)$ in Theorem \ref{thm:2.5} is the \textit{first coregularity} of a Fano variety $X$ (see \cite[Definition 1.2]{ALP23}). It arises when one only considers the boundary divisors $D$ from the anti-canonical linear system $\left|-K_X\right|$ in the definition of the regularity: 
    \begin{gather*}
        \mathrm{reg}_1(X) = \mathrm{max}\,\{\dim\mathcal{D}(X,D)\,|\,D \in \left|-K_X\right|\},\\
        \mathrm{coreg}_1(X) = \dim X - \mathrm{reg}_1(X) - 1.
    \end{gather*} 
\end{rem}
Note that to prove that a smooth Fano threefold $X$ has $\coreg X = 0$, it is sufficient to find a boundary $D$ such that $(X,D)$ is a Calabi-Yau pair and $\dim \mathcal{D}(X, D) = ~2$. To this aim, we will use the following lemma.
\begin{lem}[{\cite[Lemma 4.3]{ALP23}}] \label{lem:4.3}
 Let $D_1$ and $D_2$ be two normal surfaces with at worst du Val singularities in a smooth threefold X. Assume that scheme-theoretical intersection $D_1 \cap D_2$ is a rational curve $C$ of arithmetic genus 1 with one node $P \in C$. Also assume that $C$ belongs to the smooth locus of $D_1$ and $D_2$. Then $\dim \mathcal{D}(X, D) =2$, where $D = D_1 + D_2$.
\end{lem}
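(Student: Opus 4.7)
The plan is to work analytically near the node $P$ and construct a partial log resolution that exhibits three components of the log-pullback boundary with coefficient $1$ meeting transversally at a single point. This produces a $2$-simplex in the dual complex, and combined with the general bound $\dim \mathcal{D}(X, D) \leq \dim X - 1 = 2$, it forces $\dim \mathcal{D}(X, D_1 + D_2) = 2$.

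First I would establish a local analytic normal form for the pair at $P$. Since $D_1$ and $D_2$ are smooth at $P$ but their scheme-theoretic intersection $C$ has a node there, a transverse meeting of the two smooth surfaces would yield a smooth curve; hence the tangent planes $T_P D_1$ and $T_P D_2$ must coincide. Choosing analytic coordinates $(x, y, z)$ centered at $P$ with $D_1 = \{z = 0\}$, the tangency allows writing $D_2 = \{z = f(x, y)\}$ with $f$ vanishing to order two. The node condition on $C = \{z = 0,\, f = 0\}$ says that the quadratic part of $f$ is a product of two distinct linear forms, so after a further analytic change of coordinates in $(x, y)$ I may assume $D_2 = \{z - xy = 0\}$.

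Next I would perform two explicit blowups. Let $\pi_1 \colon Y_1 \to X$ be the blowup of $P$ with exceptional divisor $E_1 \cong \P{2}$. Since $D_1$ and $D_2$ are smooth at $P$, a direct computation gives $K_{Y_1} + \tilde D_1 + \tilde D_2 = \pi_1^{*}(K_X + D_1 + D_2)$, so $E_1$ has log discrepancy $1$ and is not in $D^{=1}$. The tangency forces $\tilde D_1 \cap E_1 = \tilde D_2 \cap E_1$ to be a single line $L = \mathbb{P}(T_P D_1) \subset E_1$. Let $\pi_2 \colon Y_2 \to Y_1$ be the blowup of $L$ with exceptional $E_2$. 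Because $\tilde D_1$, $\tilde D_2$, and $E_1$ are smooth and contain $L$, one has $\pi_2^{*} \tilde D_i = \hat D_i + E_2$; combined with $K_{Y_2} = \pi_2^{*} K_{Y_1} + E_2$, this yields $(\pi_1 \pi_2)^{*}(K_X + D_1 + D_2) = K_{Y_2} + \hat D_1 + \hat D_2 + E_2$, so $E_2$ is a log canonical place and $\hat D_1$, $\hat D_2$, $E_2$ all belong to $D_{Y_2}^{=1}$.

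Finally, in the affine chart of the two blowups given by the substitution $y = xy'$, $z = x^2 s$, the three relevant divisors have local equations $E_2 = \{x = 0\}$, $\hat D_1 = \{s = 0\}$, and $\hat D_2 = \{s + y' = 0\}$. Since $(x, s, s + y')$ is a regular system of parameters at the origin, these divisors are smooth and meet transversally at this point, and no other coefficient-$1$ component passes through it: the proper transform $\tilde E_1$ lives in the complementary chart of the second blowup, while the exceptional divisors produced when resolving the du Val singularities of $D_1$ and $D_2$ lie over points disjoint from $P$ and have positive log discrepancy because du Val singularities are canonical. Taking a further log resolution $Y \to Y_2$ whose additional blowups lie away from this triple point, one obtains three components of $D_Y^{=1}$ meeting transversally at a point, producing a $2$-simplex in $\mathcal{D}(X, D_1 + D_2)$. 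The key technical step is the local normal form, in particular the observation that a node of $C$ forces $D_1$ and $D_2$ to be tangent at the node; after this, the two blowups and their discrepancy bookkeeping are routine.
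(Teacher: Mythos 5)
The paper does not actually prove this statement: it is imported verbatim from \cite{ALP23} (Lemma 4.3 there) and used as a black box, so there is no in-text proof to compare against. Your argument is a correct, self-contained proof, and it follows what is essentially the standard route. The key observations are all right: a node of the scheme-theoretic intersection of two surfaces smooth at $P$ forces $T_PD_1=T_PD_2$ (transverse tangent planes would give a smooth intersection curve); the holomorphic Morse lemma then gives the normal form $D_1=\{z=0\}$, $D_2=\{z=xy\}$; the point blow-up produces $E_1$ with coefficient $0$ and a common line $L=\tilde D_1\cap E_1=\tilde D_2\cap E_1$; and the blow-up of $L$ produces an lc place $E_2$ meeting $\hat D_1$ and $\hat D_2$ transversally at a point, which is a zero-dimensional stratum of the coefficient-one locus. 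Combined with the trivial bound $\dim\mathcal D(X,D)\le \dim X-1$, this gives the claim. Your chart computation ($E_2=\{x=0\}$, $\hat D_1=\{s=0\}$, $\hat D_2=\{s\pm y'=0\}$) checks out, as does the discrepancy bookkeeping.

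Two small points you should make explicit to be fully rigorous. First, for $\mathcal D(X,D)$ to be well defined via Definition \ref{defn:2.3} one wants $(X,D_1+D_2)$ to be lc; this does follow from your local analysis (log smooth with coefficients in $\{0,1\}$ over a neighbourhood of $P$, transversal intersection along $C\setminus\{P\}$ because $D_1\cap D_2$ is reduced and smooth there, and plt near the du Val points since du Val singularities are canonical), but it is worth a sentence. Second, the final step ``take a further log resolution whose additional blow-ups lie away from the triple point'' should be justified by the standard fact (Szab\'o) that a log resolution can be chosen to be an isomorphism over the open locus where the pair is already log smooth; since $(Y_2,\hat D_1+\hat D_2+E_2+0\cdot\tilde E_1)$ is log smooth near the triple point, the $2$-simplex survives to any such resolution. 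With these remarks added, the proof is complete.
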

\subsection{Cusp singularities}
Recall that a surface singularity $p$ is called a cusp singularity, if $p \in \Sing X$, where $X$ is a normal surface and the preimage $\mu^{-1}(p)$ in the minimal resolution $\mu \colon \widetilde{X} \to X$ is either a cycle of smooth rational curves or a nodal rational curve of arithmetic genus one. It is known (\cite{Ka77}) that in $\mathbb{C}^{3}$ any cusp singularity up to a local analytic change of coordinates has the following form:
\begin{equation*} 
    T_{p,q,r} \colon\quad x^{q} + y^{p} + z^{r} + axyz = 0,\qquad 1/p + 1/q + 1/r < 1,\, a \neq 0.
\end{equation*}
The minimal resolution graphs of the $T_{p,q,r}$ singularities are represented in Fig. \ref{fig:2.2} (see \cite{Lau77}). For the  $T_{2,3,7},\, T_{2,4,5},\, T_{3,3,4}$ cases, the exceptional divisor is a rational curve with one node.
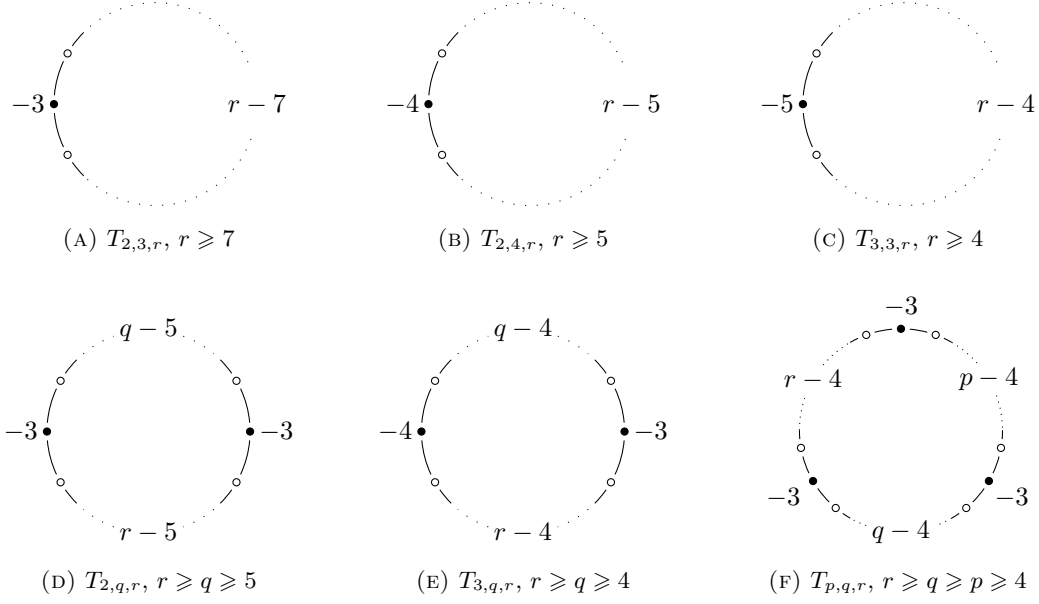
\begin{figure}
     \centering
     \begin{subfigure}[t]{0.3\textwidth}
         \centering
         \begin{tikzpicture}[scale = 0.45]
            \draw[loosely dotted] ({3*cos(130)},{3*sin(130)}) arc[start angle=130, end angle=20, radius=3];
            \draw[loosely dotted] ({3*cos(-20)},{3*sin(-20)}) arc[start angle=-20, end angle=-133, radius=3];
            \draw[black] ({3*cos(-135)},{3*sin(-135)}) arc[start angle=-135, end angle=-145, radius=3];
             \draw[black] ({3*cos(-155)},{3*sin(-155)}) arc[start angle=-155, end angle=-175, radius=3];
            \draw[black] ({3*cos(-185)},{3*sin(-185)}) arc[start angle=-185, end angle=-205, radius=3];
            \draw[black] ({3*cos(-215)},{3*sin(-215)}) arc[start angle=-215, end angle=-225, radius=3];
            \draw[black] ({3*cos(150)},{3*sin(150)}) circle (3pt);
            \draw[black] ({3*cos(-150)},{3*sin(-150)}) circle (3pt);
            \filldraw[black] ({3*cos(-180)},{3*sin(-180)}) circle (3pt);
            \node[anchor = east] at ({3*cos(-180)},{3*sin(-180)}) {$-3$};
            \node at ({3*cos(0)},{3*sin(0)}) {$r - 7$};
        \end{tikzpicture}
        \caption{$T_{2,3,r},\, r \geq 7$}
        \label{fig:t23r}
     \end{subfigure}
     \hfill
     \begin{subfigure}[t]{0.3\textwidth}
         \centering
         \begin{tikzpicture}[scale = 0.45]
            \draw[loosely dotted] ({3*cos(130)},{3*sin(130)}) arc[start angle=130, end angle=20, radius=3];
            \draw[loosely dotted] ({3*cos(-20)},{3*sin(-20)}) arc[start angle=-20, end angle=-133, radius=3];
            \draw[black] ({3*cos(-135)},{3*sin(-135)}) arc[start angle=-135, end angle=-145, radius=3];
             \draw[black] ({3*cos(-155)},{3*sin(-155)}) arc[start angle=-155, end angle=-175, radius=3];
            \draw[black] ({3*cos(-185)},{3*sin(-185)}) arc[start angle=-185, end angle=-205, radius=3];
            \draw[black] ({3*cos(-215)},{3*sin(-215)}) arc[start angle=-215, end angle=-225, radius=3];
            \draw[black] ({3*cos(150)},{3*sin(150)}) circle (3pt);
            \draw[black] ({3*cos(-150)},{3*sin(-150)}) circle (3pt);
            \filldraw[black] ({3*cos(-180)},{3*sin(-180)}) circle (3pt);
            \node[anchor = east] at ({3*cos(-180)},{3*sin(-180)}) {$-4$};
            \node at ({3*cos(0)},{3*sin(0)}) {$r - 5$};
        \end{tikzpicture}
        \caption{$T_{2,4,r},\, r \geq 5$}
        \label{fig:t24r}
     \end{subfigure}
     \hfill
     \begin{subfigure}[t]{0.3\textwidth}
         \centering
         \begin{tikzpicture}[scale = 0.45]
            \draw[loosely dotted] ({3*cos(130)},{3*sin(130)}) arc[start angle=130, end angle=20, radius=3];
            \draw[loosely dotted] ({3*cos(-20)},{3*sin(-20)}) arc[start angle=-20, end angle=-133, radius=3];
            \draw[black] ({3*cos(-135)},{3*sin(-135)}) arc[start angle=-135, end angle=-145, radius=3];
             \draw[black] ({3*cos(-155)},{3*sin(-155)}) arc[start angle=-155, end angle=-175, radius=3];
            \draw[black] ({3*cos(-185)},{3*sin(-185)}) arc[start angle=-185, end angle=-205, radius=3];
            \draw[black] ({3*cos(-215)},{3*sin(-215)}) arc[start angle=-215, end angle=-225, radius=3];
            \draw[black] ({3*cos(150)},{3*sin(150)}) circle (3pt);
            \draw[black] ({3*cos(-150)},{3*sin(-150)}) circle (3pt);
            \filldraw[black] ({3*cos(-180)},{3*sin(-180)}) circle (3pt);
            \node[anchor = east] at ({3*cos(-180)},{3*sin(-180)}) {$-5$};
            \node at ({3*cos(0)},{3*sin(0)}) {$r - 4$};
        \end{tikzpicture}
        \caption{$T_{3,3,r},\, r\geq 4$}
        \label{fig:t33r}
     \end{subfigure}
     \begin{subfigure}[b]{0.3\textwidth}
         \centering
         \begin{tikzpicture}[scale = 0.45]
            \draw[loosely dotted] ({3*cos(130)},{3*sin(130)}) arc[start angle=130, end angle=110, radius=3];
            \draw[loosely dotted] ({3*cos(70)},{3*sin(70)}) arc[start angle=70, end angle=50, radius=3];
            \draw[loosely dotted] ({3*cos(-50)},{3*sin(-50)}) arc[start angle=-50, end angle=-70, radius=3];
            \draw[loosely dotted] ({3*cos(-110)},{3*sin(-110)}) arc[start angle=-110, end angle=-130, radius=3];
            \draw[black] ({3*cos(-135)},{3*sin(-135)}) arc[start angle=-135, end angle=-145, radius=3];
             \draw[black] ({3*cos(-155)},{3*sin(-155)}) arc[start angle=-155, end angle=-175, radius=3];
            \draw[black] ({3*cos(-185)},{3*sin(-185)}) arc[start angle=-185, end angle=-205, radius=3];
            \draw[black] ({3*cos(-215)},{3*sin(-215)}) arc[start angle=-215, end angle=-225, radius=3];
            \draw[black] ({3*cos(150)},{3*sin(150)}) circle (3pt);
            \draw[black] ({3*cos(-150)},{3*sin(-150)}) circle (3pt);
            \filldraw[black] ({3*cos(-180)},{3*sin(-180)}) circle (3pt);
            \draw[black] ({3*cos(-45)},{3*sin(-45)}) arc[start angle=-45, end angle=-35, radius=3];
            \draw[black] ({3*cos(-25)},{3*sin(-25)}) arc[start angle=-25, end angle=-5, radius=3];
            \draw[black] ({3*cos(5)},{3*sin(5)}) arc[start angle=5, end angle=25, radius=3];
            \draw[black] ({3*cos(35)},{3*sin(35)}) arc[start angle=35, end angle=45, radius=3];
            \filldraw[black] ({3*cos(0)},{3*sin(0)}) circle (3pt);
            \draw[black] ({3*cos(30)},{3*sin(30)}) circle (3pt);
            \draw[black] ({3*cos(-30)},{3*sin(-30)}) circle (3pt);
            \node[anchor = east] at ({3*cos(-180)},{3*sin(-180)}) {$-3$};
            \node[anchor = west] at ({3*cos(0)},{3*sin(0)}) {$-3$};
            \node[white, anchor = south] at ({3*cos(90)},{3*sin(90) + 1}) {$-3$};
            \node at ({3*cos(90)},{3*sin(90)}) {$q-5$};
            \node at ({3*cos(-90)},{3*sin(-90)}) {$r - 5$};
        \end{tikzpicture}
        \caption{$T_{2,q,r},\,r\geq q\geq 5$}
        \label{fig:t2qr}
     \end{subfigure}
     \hfill
     \begin{subfigure}[b]{0.3\textwidth}
         \centering
         \begin{tikzpicture}[scale = 0.45]
            \draw[loosely dotted] ({3*cos(130)},{3*sin(130)}) arc[start angle=130, end angle=110, radius=3];
            \draw[loosely dotted] ({3*cos(70)},{3*sin(70)}) arc[start angle=70, end angle=50, radius=3];
            \draw[loosely dotted] ({3*cos(-50)},{3*sin(-50)}) arc[start angle=-50, end angle=-70, radius=3];
            \draw[loosely dotted] ({3*cos(-110)},{3*sin(-110)}) arc[start angle=-110, end angle=-130, radius=3];
            \draw[black] ({3*cos(-135)},{3*sin(-135)}) arc[start angle=-135, end angle=-145, radius=3];
             \draw[black] ({3*cos(-155)},{3*sin(-155)}) arc[start angle=-155, end angle=-175, radius=3];
            \draw[black] ({3*cos(-185)},{3*sin(-185)}) arc[start angle=-185, end angle=-205, radius=3];
            \draw[black] ({3*cos(-215)},{3*sin(-215)}) arc[start angle=-215, end angle=-225, radius=3];
            \draw[black] ({3*cos(150)},{3*sin(150)}) circle (3pt);
            \draw[black] ({3*cos(-150)},{3*sin(-150)}) circle (3pt);
            \filldraw[black] ({3*cos(-180)},{3*sin(-180)}) circle (3pt);
            \draw[black] ({3*cos(-45)},{3*sin(-45)}) arc[start angle=-45, end angle=-35, radius=3];
            \draw[black] ({3*cos(-25)},{3*sin(-25)}) arc[start angle=-25, end angle=-5, radius=3];
            \draw[black] ({3*cos(5)},{3*sin(5)}) arc[start angle=5, end angle=25, radius=3];
            \draw[black] ({3*cos(35)},{3*sin(35)}) arc[start angle=35, end angle=45, radius=3];
            \filldraw[black] ({3*cos(0)},{3*sin(0)}) circle (3pt);
            \draw[black] ({3*cos(30)},{3*sin(30)}) circle (3pt);
            \draw[black] ({3*cos(-30)},{3*sin(-30)}) circle (3pt);
            \node[anchor = east] at ({3*cos(-180)},{3*sin(-180)}) {$-4$};
            \node[anchor = west] at ({3*cos(0)},{3*sin(0)}) {$-3$};
            \node[white, anchor = south] at ({3*cos(90)},{3*sin(90) + 1}) {$-3$};
            \node at ({3*cos(90)},{3*sin(90)}) {$q-4$};
            \node at ({3*cos(-90)},{3*sin(-90)}) {$r - 4$};
        \end{tikzpicture}
        \caption{$T_{3,q,r},\, r \geq q \geq 4$}
        \label{fig:t3qr}
     \end{subfigure}
     \hfill
     \begin{subfigure}[b]{0.3\textwidth}
         \centering
         \begin{tikzpicture}[scale = 0.45]
            \draw[dotted] ({3*cos(60)},{3*sin(60)}) arc[start angle=60, end angle=40, radius=3];
            \draw[ dotted] ({3*cos(20)},{3*sin(20)}) arc[start angle=20, end angle=0, radius=3];
            \draw[ dotted] ({3*cos(-60)},{3*sin(-60)}) arc[start angle=-60, end angle=-70, radius=3];
            \draw[ dotted] ({3*cos(-110)},{3*sin(-110)}) arc[start angle=-110, end angle=-120, radius=3];
            \draw[ dotted] ({3*cos(180)},{3*sin(180)}) arc[start angle=180, end angle=160, radius=3];
            \draw[dotted] ({3*cos(140)},{3*sin(140)}) arc[start angle=140, end angle=120, radius=3];
            \draw[black] ({3*cos(85)},{3*sin(85)}) arc[start angle=85, end angle=75, radius=3];
             \draw[black] ({3*cos(65)},{3*sin(65)}) arc[start angle=65, end angle=60, radius=3];
            \draw[black] ({3*cos(0)},{3*sin(0)}) arc[start angle=0, end angle=-5, radius=3];
            \draw[black] ({3*cos(-15)},{3*sin(-15)}) arc[start angle=-15, end angle=-25, radius=3];
            \draw[black] ({3*cos(-35)},{3*sin(-35)}) arc[start angle=-35, end angle=-45, radius=3];
            \draw[black] ({3*cos(-55)},{3*sin(-55)}) arc[start angle=-55, end angle=-60, radius=3];
            \draw[black] ({3*cos(-120)},{3*sin(-120)}) arc[start angle=-120, end angle=-125, radius=3];
            \draw[black] ({3*cos(-135)},{3*sin(-135)}) arc[start angle=-135, end angle=-145, radius=3];
            \draw[black] ({3*cos(-155)},{3*sin(-155)}) arc[start angle=-155, end angle=-165, radius=3];
            \draw[black] ({3*cos(-175)},{3*sin(-175)}) arc[start angle=-175, end angle=-180, radius=3];
            \draw[black] ({3*cos(120)},{3*sin(120)}) arc[start angle=120, end angle=115, radius=3];
            \draw[black] ({3*cos(105)},{3*sin(105)}) arc[start angle=105, end angle=95, radius=3];
            \filldraw[black] ({3*cos(90)},{3*sin(90)}) circle (3pt);
            \filldraw[black] ({3*cos(-30)},{3*sin(-30)}) circle (3pt);
            \filldraw[black] ({3*cos(-150)},{3*sin(-150)}) circle (3pt);
            \draw[black] ({3*cos(70)},{3*sin(70)}) circle (3pt);
            \draw[black] ({3*cos(110)},{3*sin(110)}) circle (3pt);
            \draw[black] ({3*cos(-10)},{3*sin(-10)}) circle (3pt);
            \draw[black] ({3*cos(-50)},{3*sin(-50)}) circle (3pt);
            \draw[black] ({3*cos(-130)},{3*sin(-130)}) circle (3pt);
            \draw[black] ({3*cos(-170)},{3*sin(-170)}) circle (3pt);
            \node at ({3*cos(30)},{3*sin(30)}) {$p - 4$};
            \node at ({3*cos(150)},{3*sin(150)}) {$r - 4$};
            \node at ({3*cos(-90)},{3*sin(-90)}) {$q - 4$};
            \node[white, anchor = south] at ({3*cos(90)},{3*sin(90) + 1}) {$-3$};
            \node[anchor = south] at ({3*cos(90)},{3*sin(90) + 0.1}) {$-3$};
            \node[anchor = west] at ({3*cos(-30)-0.1},{3*sin(-30)-0.5}) {$-3$};
            \node[anchor = east] at ({3*cos(-150)-0.1},{3*sin(-150)-0.5}) {$-3$};
        \end{tikzpicture}
        \caption{$T_{p,q,r},\, r\geq q \geq p \geq 4$}
        \label{fig:tpqr}
     \end{subfigure}
        \caption{Minimal resolution graphs of the $T_{p,q,r}$ singularities. A vertex ~$\circ$ denotes a smooth rational $(-2)$-curve}. 
        \label{fig:2.2}
\end{figure}

We will prove the following lemma to recognize some particular singularities as $T_{p,q,r}$ singularities: 
\begin{lem} \label{lem:2.2}
    Suppose a surface in $\mathbb{C}^3$ is defined by the equation:
    \begin{equation} \label{eq:2.2}
    g(x,y,z) = xyz +x^p + y^q + z^r + g_{\geq 4}(x,y,z) = 0,
    \end{equation}
    where $1/p + 1/q + 1/r < 1,\, r \geq q \geq p \geq 4$ and $g_{\geq 4}$ is a polynomial whose monomials $c_{\alpha}x^{\alpha_1}y^{\alpha_2}z^{\alpha_3}$ have two properties: 
    \begin{enumerate}
        \item $\alpha_1 + \alpha_2 + \alpha_3 \geq 4$,
        \item at most one of the degrees $\alpha_1,\, \alpha_2,\, \alpha_3$ is equal to $0$.
    \end{enumerate} Then the singularity at the point $(0,0,0)$ is of the type $T_{p,q,r}$.
\end{lem}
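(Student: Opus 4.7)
The plan is to resolve the singularity by a single blowup of $\mathbb{C}^3$ at the origin, identify the three Du Val singularities that arise on the strict transform, and then invoke Karras's classification of cusp singularities (\cite{Ka77}) to conclude that the resulting resolution graph is that of $T_{p,q,r}$.

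First I would observe that, since $p,q,r \geq 4$ and every monomial of $g_{\geq 4}$ has total degree at least $4$, the tangent cone of $g$ at the origin is exactly $xyz$. After blowing up the origin, the strict transform $\widetilde S$ of $S = V(g)$ therefore meets the exceptional $\mathbb{P}^2$ along three lines $L_x, L_y, L_z$ forming a triangle, and the only candidate singularities of $\widetilde S$ lie at the three vertices of this triangle. Working in the chart $(x, y', z')$ with $y = xy'$, $z = xz'$, the strict transform has local equation
\[
\tilde g = y'z' + x^{p-3} + x^{q-3}(y')^q + x^{r-3}(z')^r + \sum_\alpha c_\alpha x^{|\alpha|-3}(y')^{\alpha_2}(z')^{\alpha_3},
\]
and condition (2) implies that no monomial of $g_{\geq 4}$ is a pure power, so the restriction $\tilde g(x,0,0) = x^{p-3}$ retains leading coefficient $1$.

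The heart of the proof is the identification of the singularity at the triangle vertex $L_y \cap L_z$ (the origin of the chart above) as $A_{p-4}$. If $p=4$, the linear term $x$ in $\tilde g$ already yields smoothness. If $p \geq 5$, I would first apply a linear substitution in $y', z'$ to absorb the degree-$2$ cross terms $xy'$, $xz'$ coming from any degree-$4$ monomials of the form $x^3y, x^3z$ in $g_{\geq 4}$, and then use the splitting lemma to reduce $\tilde g$ to the normal form $y'z' + g_1(x)$. Conditions (1) and (2) would be used together to check that the implicit-function solutions $y'(x), z'(x)$ of the critical equations $\partial \tilde g/\partial y' = \partial \tilde g/\partial z' = 0$ contribute only terms of order strictly greater than $p-3$ to $g_1(x)$, so that $g_1(x) = x^{p-3}\cdot(\text{unit})$ and the local singularity is $A_{p-4}$. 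A symmetric analysis in the $y$- and $z$-charts gives singularities of types $A_{q-4}$ and $A_{r-4}$ at the other two vertices of the triangle.

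Finally, I would resolve these three Du Val singularities to produce the minimal resolution of $S$. The exceptional divisor is the cycle consisting of the strict transforms of $L_x, L_y, L_z$, each of self-intersection $-3$ by a standard adjunction computation on $\widetilde S$, connected by chains of $(p-4), (q-4), (r-4)$ smooth rational $(-2)$-curves. This dual graph is precisely Figure \ref{fig:tpqr}, so by Karras's theorem (\cite{Ka77}) the singularity is analytically of type $T_{p,q,r}$. The hard part will be the splitting-lemma step: one must carefully track how the degree-$4$ monomials in $g_{\geq 4}$ modify the $2$-jet after blowup, use condition (2) to rule out pure-$x$ perturbations that could change the order of $x^{p-3}$ in $g_1$, and verify that the preliminary linear substitutions do not cancel or disturb the leading coefficient of $g_1(x)$.
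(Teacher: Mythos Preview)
Your geometric approach --- blow up the origin, identify the triangle of exceptional lines, classify the Du Val points at its vertices, then read off the $T_{p,q,r}$ graph and invoke \cite{Ka77} --- is genuinely different from the paper's argument, which never leaves the formal power-series ring. The paper uses the $xyz$ term to kill the monomials of $g_{\geq 4}$ one at a time via substitutions of the shape $x \mapsto x' - c_\alpha (x')^{\alpha_1}y^{\alpha_2-1}z^{\alpha_3-1}$ (and its cyclic analogues), arguing that each step only introduces error terms of strictly higher total degree still obeying (1)--(2), so that the infinite composition converges to an analytic coordinate change bringing $g$ to the $T_{p,q,r}$ normal form.

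There is, however, a real gap at exactly the point you flag as ``the hard part''. Conditions (1) and (2) permit $g_{\geq 4}$ to contain \emph{both} $a\,x^3y$ and $b\,x^3z$; then the $2$-jet of $\tilde g$ at the vertex $[1{:}0{:}0]$ is $y'z' + a\,xy' + b\,xz'$, and your linear substitution $Y = y'+bx$, $Z = z'+ax$ rewrites this as $YZ - ab\,x^2$. Hence $g_1(x)$ picks up a leading term $-ab\,x^2$ rather than $x^{p-3}$, and the vertex singularity is $A_1$, not $A_{p-4}$. Concretely, $g = xyz + x^3y + x^3z + x^6 + y^6 + z^6$ meets all the hypotheses with $p=q=r=6$, yet your computation (correctly) produces $A_1,A_2,A_2$ at the three vertices --- the resolution graph of $T_{5,6,6}$, not $T_{6,6,6}$. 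This is not a defect of your strategy so much as of the lemma's stated generality: the same example breaks the paper's inductive claim that condition~(2) is preserved, since the substitution $z\mapsto z-x^2$ used to kill $x^3y$ turns $x^3z$ into $x^3z - x^5$, a pure $x$-power of degree below~$p$. In the paper's actual applications one always has $p=q=r=4$ with $g_{\geq 4}$ a sum of monomials $x_i^2x_j^2$, and there both your blow-up argument and the paper's substitution procedure go through without trouble.
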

\begin{proof}
   We describe the procedure which will give us the local analytic change of coordinates providing the $T_{p,q,r}$ form for the polynomial \eqref{eq:2.2}. Suppose the monomial $c_{\alpha}x^{\alpha_1}y^{\alpha_2}z^{\alpha_3}$ in $g$ has $c_{\alpha} \neq 0,\, \alpha_2 \geq 1,\, \alpha_3 \geq 1,\, \alpha_1 + \alpha_2 + \alpha_3 \geq 4$, then consider a local analytic change of coordinates $x = x' - c_{\alpha}x'^{\alpha_1}y^{\alpha_2-1}z^{\alpha_3-1}$. For \textit{any} monomial $c_{\beta}x^{\beta_1}y^{\beta_2}z^{\beta_3}$ in $g(x,y,z)$ we have
    \begin{gather*}
    c_{\beta}x^{\beta_1}y^{\beta_2}z^{\beta_3} = c_{\beta}(x' - c_{\alpha}x'^{\alpha_1}y^{\alpha_2-1}z^{\alpha_3-1})^{\beta_1}y^{\beta_2}z^{\beta_3} = \\ = c_{\beta}x'^{\beta_1}y^{\beta_2}z^{\beta_3} + f_{> \beta_1 + \beta_2 + \beta_3}(x', y, z),
   \end{gather*}
   where 
   \begin{equation*}
   f_{> \beta_1 + 
   \beta_2 + \beta_3} =
    \begin{cases}
    0, &\beta_1 = 0; \\
    \sum \limits_{k = 1}^{\beta_1} c_{\alpha}^kc_{\beta} \binom{\beta_1}{k}x^{\beta_1 + k(\alpha_1-1)}y^{\beta_2 +k(\alpha_2 - 1)}z^{\beta_3 + k(\alpha_3 - 1)}, &\beta_1 \geq 1.
    \end{cases}   
   \end{equation*}
   In particular, 
    \begin{equation*}
        xyz = x'yz - c_\alpha x'^{\alpha_1}y^{\alpha_2}z^{\alpha_3}.
    \end{equation*}
   Note that if $f_{>\beta_1 + \beta_2 + \beta_3} \neq 0$, the degrees of the monomials in $f_{>\beta_1 + \beta_2 + \beta_3}$ are strictly greater than $\beta_1 + \beta_2 + \beta_3$:
   \begin{gather*}
        \beta_1 + k(\alpha_1-1) +  \beta_2 +k(\alpha_2 - 1) +  \beta_3 + k(\alpha_3 - 1)) = \\ = \beta_1 + \beta_2 + \beta_3 + k(\alpha_1 + \alpha_2 + \alpha_3) - 3k \geq \\ \geq \beta_1 + \beta_2 + \beta_3 + k > \beta_1 + \beta_2 + \beta_3.
   \end{gather*}
   If $\beta_1 + \beta_2 + \beta_3 \geq 4$, they are greater than $\alpha_1 + \alpha_2 + \alpha_3$ as well:
   \begin{gather*}
        \beta_1 + k(\alpha_1-1) +  \beta_2 +k(\alpha_2 - 1) +  \beta_3 + k(\alpha_3 - 1)) = \\ = \beta_1 + \beta_2 + \beta_3 + (k-1)(\alpha_1 + \alpha_2 + \alpha_3) + \alpha_1 + \alpha_2 + \alpha_3 - 3k \geq \\ \geq 4 + 4(k-1) - 3k + \alpha_1 + \alpha_2 + \alpha_3 = \\ = k + \alpha_1 + \alpha_2 + \alpha_3 > \alpha_1 + \alpha_2 + \alpha_3.   
   \end{gather*}
   Also note that monomials in $f_{>\beta_1 + \beta_2 + \beta_3}$ satisfy condition (2) of this lemma. Indeed, if $\beta_i + k(\alpha_i - 1) = 0$ for $2$ different indices $i \in \{1,2,3\}$, then for the same indices $i$ we have $\alpha_i = 0$, which is impossible. From here we conclude that the change of coordinates eliminated the monomial $c_{\alpha}x^{\alpha_1}y^{\alpha_2}z^{\alpha_3}$, did not change other monomials and possibly added monomials of degree $> \alpha_1 + \alpha_2 + \alpha_3$ which satisfy conditions $(1)$--$(2)$ of this lemma. We had $\alpha_2 \geq 1,\, \alpha_3 \geq 1$, but two other cases are treated analogously. Use this procedure inductively to sequentially eliminate monomials of all degrees. Take the composition of changes of coordinates:
\begin{align*}
x &= x'' + h_1(x'',y'',z''), \\
y &= y'' + h_2(x'',y'',z''), \\
z &= z'' + h_3(x'',y'',z''), \\
x''y''z'' &+ x''^4 + y''^4 + z''^4 = 0.
\end{align*} Here $h_i$ are the power series with zero constant and linear terms, so we have an isomorphism $\mathbb{C}[[x,y,z]] \cong \mathbb{C}[[x'', y'', z'']]$ which shows that the point is a singularity of the type $T_{p,q,r}$.
\end{proof}
\section{Sextic double solid}
In this section, we construct a smooth Fano threefold $X$ in the family $1.1$ and a boundary $D$ on it such that $\mathrm{coreg}(X, D)=0$. 
Consider a surface $S \subseteq \P{3}$ of degree $6$ which is defined by the following equation:
\begin{equation*}   f_6 = x_0^2x_1^2x_2^2+x_0^2x_1^4+x_2^ 6+x_3(x_0^5+x_1^5+x_3^5) = 0.
\end{equation*}
Consider a threefold $X$ given by the equation 
\[
\{u^2 = f_6\}\subseteq \mathbb{P}(1,1,1,1,3),
\]
where $u$ has weight $3$ and $x_i$ have weight $1$. 
Note that its anti-canonical linear system $\left|-K_X\right|$ defines a double cover $\phi\colon X \to \P{3}$ ramified in $S$.
\begin{prop}
    $X$ is a smooth Fano threefold from the family 1.1. 
\end{prop}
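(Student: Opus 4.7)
The plan is to verify that $X$ is a Fano threefold, that $X$ is smooth, and that it belongs to family $1.1$. First I would compute the anticanonical class. Using the double cover description $\phi \colon X \to \P{3}$ branched over the sextic $S$, the standard formula for cyclic covers gives $K_X = \phi^{*}(K_{\P{3}} + \tfrac{1}{2}S) = \phi^{*}(-4H + 3H) = -\phi^{*}H$, where $H$ is the hyperplane class on $\P{3}$. Since $\phi$ is finite and $H$ is ample, $-K_X = \phi^{*}H$ is ample, so $X$ is Fano, and $(-K_X)^3 = 2 \cdot H^3 = 2$, which is the expected invariant of family $1.1$.

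Next I would show that $X$ is smooth. The only singular point of the ambient weighted projective space $\mathbb{P}(1,1,1,1,3)$ is $[0:0:0:0:1]$, and it does not lie on $X$ because at this point $u^{2} - f_6 = 1 \neq 0$. At every other point $\mathbb{P}(1,1,1,1,3)$ is smooth, so the Jacobian criterion applies to the defining equation $u^2 - f_6$; together with Euler's identity for $f_6$ it shows that any singular point of $X$ must satisfy $u = 0$ and $\partial_i f_6 = 0$ for all $i$. Hence smoothness of $X$ is equivalent to smoothness of the sextic surface $S = \{f_6 = 0\} \subseteq \P{3}$.

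The core of the argument, and its main technical obstacle, is checking that $S$ has no singular points. The partial derivatives are
\begin{align*}
\partial_0 f_6 &= 2x_0 x_1^2(x_1^2 + x_2^2) + 5 x_0^4 x_3,\\
\partial_1 f_6 &= 2x_0^2 x_1(2x_1^2 + x_2^2) + 5 x_1^4 x_3,\\
\partial_2 f_6 &= 2x_2(x_0^2 x_1^2 + 3 x_2^4),\\
\partial_3 f_6 &= x_0^5 + x_1^5 + 6 x_3^5.
\end{align*}
I would proceed by case analysis on the vanishing of the coordinates. The factorization of $\partial_2 f_6$ splits the problem into the branches $x_2 = 0$ and $x_0^2 x_1^2 = -3 x_2^4$; each further subdivides according to whether $x_0$ or $x_1$ vanishes. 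In every subcase the system $\partial_i f_6 = 0$ reduces to a small collection of incompatible monomial identities, so $S$ is smooth. Once smoothness of $S$, and hence of $X$, is established, the invariants $\rho(X) = 1$ (via a Lefschetz-type result for double covers of $\P{3}$) together with $(-K_X)^3 = 2$ identify $X$ as a member of family $1.1$.
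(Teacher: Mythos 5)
Your overall structure is sound and in places more careful than the paper's: the paper simply invokes adjunction for the Fano property and asserts that smoothness of $X$ reduces to smoothness of $S$, whereas you justify the reduction (the unique singular point $[0:0:0:0:1]$ of $\mathbb{P}(1,1,1,1,3)$ misses $X$, and Euler's identity forces any singular point of $X$ to satisfy $u=0$ and to lie over a singular point of $S$), compute $K_X=-\phi^*H$ and $(-K_X)^3=2$, and pin down the family via $\rho(X)=1$. The partial derivatives you list are also correct.

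The gap is in the assertion that the case analysis driven by the factorization $\partial_2 f_6 = 2x_2(x_0^2x_1^2+3x_2^4)$ closes in every branch via ``incompatible monomial identities.'' In the branch $x_2=0$ this essentially works: with $x_0x_1x_3\neq 0$ one derives $x_1^5=2x_0^5$ and two incompatible values of $x_3^5$ (namely $-x_0^5/2$ from $\partial_3 f_6=0$ versus $-512x_0^5/3125$ from the first two equations), and the degenerate subcases are immediate. But in the branch $x_2\neq 0$, $x_0^2x_1^2=-3x_2^4$, all of $x_0,x_1,x_2$ are nonzero, and after normalizing $x_0=1$ and eliminating $x_1$ and $x_3$ from $\partial_0f_6=\partial_1f_6=0$, the remaining conditions $x_1^2=-3x_2^4$ and $\partial_3f_6=0$ become two genuinely nontrivial polynomial equations in the single variable $x_2$; showing they have no common root requires a resultant or gcd computation, not an inspection of monomials. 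That verification is precisely what the paper outsources to the Gr\"obner basis computations Comp.~\ref{fig:calc1} and \ref{fig:calc2}, and without it smoothness of $S$ — the core of the proposition — is not established. Either carry out the elimination in that branch explicitly or, as the paper does, cite an explicit computer-algebra check.
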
 
\begin{proof}
By adjunction formula we have that $X$ is a Fano variety. 
Let us check that $X$ is smooth.
It is sufficient to check that $S$ is smooth surface. We use Gr\"obner basis computation to make sure that the Jacobian matrix has rank $1$ at every point of $X$. This is equivalent to showing that the following system of equations have no solutions in \P{3}:
\begin{gather}
\label{sys20}
\begin{cases}
    \frac{\partial f_6}{\partial x_0} = 2x_0x_1^2x_2^2 +2x_0x_1^4+5x_3x_0^4 = 0,\\
    \frac{\partial f_6}{\partial x_1} = 2x_1x_0^2x_2^2 + 4x_1^3x_0^2 + 5x_3x_1^4 = 0,\\ 
    \frac{\partial f_6}{\partial x_2} =2x_2x_0^2x_1^2 + 6x_2^5 = 0,\\
    \frac{\partial f_6}{\partial x_3} = 6x_3^5 + x_0^5+ x_1^5 = 0.
\end{cases}
\end{gather}
Consider an ideal $\mathfrak{a} =  (\frac{\partial f_6}{\partial x_i})_{i=0}^3 \subseteq \mathbb{C}[x_0,x_1,x_2,x_3]$. The Gr\"obner basis computation Comp.~\ref{fig:calc2} shows that $\mathfrak{a} + (x_3 - 1) = (1)$. Therefore, for the solution of the system \eqref{sys20} it is only possible to have $x_3 = 0$. A Gr\"obner basis for the ideal $\mathfrak{a} + (x_3)$ is computed in Comp.~\ref{fig:calc1}. In particular, 
\begin{equation*}
    \{x_3,\, x_2^6,\, 2x_1^8+x_1^6x_2^2,\, x_0^5+x_1^5\} \subseteq \mathfrak{a} + (x_3).    
\end{equation*} From here we conclude that the only solution of the system \eqref{sys20} is $x_3 = x_2 = x_1 = x_0 = 0$, or there is no solutions in $\P{3}$.
\end{proof}
Now we construct the boundary divisor $D$ on $X$. 
Consider an element $D \in |-K_X|$ such that $\phi(D) = H = \{x_3 = 0\}$. Then $\phi$ induces a double cover $\phi|_{D}\colon D \to H$ ramified in a sextic curve 
\[
R = H \cap S = \{x_0^2x_1^2x_2^2+x_0^2x_1^4+x_2^ 6 = 0\}.
\]
Note that the curve $R$ has two singular points $p = (1:0:0:0),\, q = (0:1:0:0)$. In the chart $U_0 = \{x_0 \neq 0\}$ the equation of $R$ near the point $p$ is of the following form: 
\begin{equation*}
    x_1^2x_2^2+x_1^4+x_2^6 = 0.
\end{equation*} Then $D$ is locally defined near the point $p' = \phi^{-1}(p)$ as $u^2 = x_1^2x_2^2 + x_1^4 + x_2^6$. Make a change of coordinates $u = iu',\, x_1 = ix_1'$ to get the following form of the equation of $D$: 
\begin{equation*}
    u'^2 - x_1'^2x_2^2 + x_1^4 + x_2^6 = 0.
\end{equation*}
Then make a local analytic change of coordinates $u' = u'' + x_1'x_2$ to obtain the equation
\begin{equation} \label{eq:3.3}
    u''^2 + x_1'^4 + x_2^6 + 2u''x_1'x_2 = 0.
\end{equation}
Observe that $p'\in D$ is a
$T_{2,4,6}$ singularity. 
Next, in the chart $U_1 = \{x_1 \neq 0\}$ the equation of $H$ near the point $q$ is of the following form: 
\begin{equation*}
    x_0^2x_2^2+x_0^2+x_2^6 = 0.
\end{equation*}
Locally near the point $q' = \phi^{-1}(q)$ the divisor $D$ is defined by the equation 
\[
u^2 =  x_0^2x_2^2+x_0^2+x_2^6.
\]
Make an invertible linear change of coordinates $x = u - x_0,\,y =  u + x_0,\, z = x_2$ to obtain:
\begin{equation*}
    xy = \frac{1}{2}xz^2 - \frac{1}{2}yz^2 + z^6.
\end{equation*}
Now make the following local analytic change of coordinates:
\begin{align*}
    &x' = x + \frac{1}{2} z^2,\\
    &y' = y - \frac{1}{2} z^2,\\
    &z' = z\left(-\frac{1}{4} + z^2\right)^{1/4}.
\end{align*}
This provides the form $x'y' = z'^4$, so $q'\in D$ is locally-analytically equivalent to the $A_3$ singularity. Therefore, the divisor $D$ has two singular points: $p'$ of type $T_{2,4,6}$ and $q'$ of type $A_3$.
\begin{prop}
The pair $(X,D)$ is a Calabi-Yau pair with $\dim \mathcal{D}(X, D) =~2$.
\end{prop}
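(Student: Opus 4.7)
The plan is to combine the automatic equivalence $K_X + D \sim 0$ (since $D \in \left|-K_X\right|$) with inversion of adjunction for log canonicity, and then a local analysis at the cusp $p'$ to produce a $2$-simplex in $\mathcal{D}(X, D)$.

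First I check that $(X, D)$ is log canonical. Since $X$ is smooth and $D$ is a reduced Cartier divisor, inversion of adjunction reduces this to verifying that $D$ is semi-log-canonical at each of its singular points. By the computations preceding the proposition, $D$ has only two singularities: an $A_3$ at $q'$ (canonical, hence slc) and a $T_{2,4,6}$ cusp at $p'$ (a standard non-klt lc surface singularity). Thus $(X, D)$ is a Calabi--Yau pair.

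For the dual complex, take a log resolution $f\colon Y \to X$ of $(X, D)$. Near $q'$ the pair is plt (because $A_3$ is klt), so no exceptional divisor over $q'$ has log discrepancy zero. Near $p'$, by Fig.~\ref{fig:t24r} with $r = 6$, the minimal resolution of the cusp is a cycle of two smooth rational curves, a $(-4)$-curve and a $(-2)$-curve. In the minimal embedded log resolution of $(X, D)$ these two curves appear as $E_i = \tilde{D} \cap F_i$ for exceptional prime divisors $F_1, F_2 \subset Y$, and the $F_i$ have log discrepancy zero since they are non-klt log canonical places lying over the cusp. Hence $D_Y^{=1} = \tilde{D} + F_1 + F_2$. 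The cycle $E_1 \cup E_2 \subset \tilde{D}$ has two nodes, and above each node the three divisors $\tilde{D}, F_1, F_2$ meet transversally in $Y$ by the snc property of the log resolution. Each such node therefore yields a $2$-simplex with vertices $v_{\tilde{D}}, v_{F_1}, v_{F_2}$ in $\mathcal{D}(X, D)$. Combined with the bound $\dim \mathcal{D}(X, D) \leq \dim X - 1 = 2$, this gives $\dim \mathcal{D}(X, D) = 2$.

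I expect the main obstacle to be the rigorous identification of the two exceptional divisors $F_1, F_2$ of log discrepancy zero above the cusp. In practice I would either produce the embedded log resolution of $T_{2,4,6}$ by hand via a short sequence of (weighted) blowups of the curves and points suggested by equation \eqref{eq:3.3}, or invoke a general structural result for log resolutions of cusp singularities on a Cartier divisor in a smooth threefold. The remaining ingredients -- the slc classification, the plt nature of $A_3$, and the transverse triple intersection at each node -- are routine once this resolution is in hand.
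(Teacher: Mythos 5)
Your proposal is correct and follows essentially the same route as the paper: log canonicity via inversion of adjunction, triviality of $K_X+D$ from $D\in\left|-K_X\right|$, and a $2$-simplex produced by the two lc places over the $T_{2,4,6}$ cusp meeting $\tilde{D}$ at a node of the exceptional cycle. The one step you flag as outstanding is carried out in the paper exactly by your first suggested method: a $(2,1,1)$ weighted blow-up at $p'$ (giving $K_{X'}=\sigma_1^*K_X+3E_1$ and $D'=\sigma_1^*D-4E_1$, so $a(E_1,X,D)=0$) followed by an ordinary blow-up of the resulting node, yielding a dlt model with a zero-dimensional stratum.
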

\begin{proof}
The pair $(X,D)$ is lc by inversion of adjunction.  Since $D \in \left|-K_X\right|$, the pair is Calabi-Yau. Now consider the weighted blow-up of the point $p'$ with weights $(2,1,1)$ with respect to the local parameters $(u'',x_1', x_2)$ as in \eqref{eq:3.3}:
\begin{equation*}
    \sigma_1 \colon X' \to X.
\end{equation*}
Let $D'$ be the strict transform of $D$ and $E_1 \cong \mathbb{P}(2,1,1)$ the exceptional divisor. Note that $K_{X'} = \sigma_1^*K_X + 3E_1,\, D' = \sigma_1^*D - 4E_1$, so the pair $(X', D' + E_1)$ is the log pullback of $(X,D)$. The exceptional divisor $E_1$ has one $A_1$ singularity which does not lie on $D' \cap E_1$. $D'$ has one $A_1$ singularity at the point $p''$ and $D' \cap E_1 = C_1$, where $C_1$ is a nodal curve which has one node at $p''$. Blow up $p''$ to obtain a morphism 
\[\sigma_2\colon X''\to X'.\] Let $D''$, $E_1'$ be the strict transforms of $D'$ and $E_1$, respectively, and $E_2 \cong \P{2}$ the exceptional divisor. The pair $(X'', D'' + E_1' + E_2)$ is the log pullback of the pair ($X, D$). The map $\sigma_2|_{D''}\colon D'' \to D$ provides the minimal resolution of the $T_{2,4,6}$ singularity at the point $p$ (see Fig. \ref{fig:t24r}). The pair $(X'', D'' + E_1' + E_2)$ is dlt and the boundary has a zero-dimensional stratum, so $\dim \mathcal{D}(X, D) =~2$.
\end{proof}

\section{Quartic in \P{4}}
Examples of smooth quartic hypersurfaces in \P{4} with maximal intersection boundaries were provided in \cite{Ka18}. There also exists a quartic with 3 ordinary double points which has $\coreg(X) = 0$, since its hyperplane section has $T_{3,3,4}$ singularity \cite[3.2]{Ka18}. We provide an example of a smooth quartic $X_4 \subseteq \P{4}$ with  singularity of the type $T_{3,3,4}$ on the hyperplane section, which is defined by the following equation:
\begin{equation} \label{eq:4.1}
    f = x_0x_1^3 + x_0x_2^3 + x_3^4 + x_0x_1x_2x_3 + x_4(x_0^3 + 2x_1^3 + x_4^3) = 0.
\end{equation}
\begin{prop}
    $X_4$ is a smooth Fano threefold from the family 1.2.
\end{prop}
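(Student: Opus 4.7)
The plan mirrors the previous proposition. Since $X_4 \subseteq \mathbb{P}^4$ is a quartic hypersurface, the adjunction formula gives $-K_{X_4} \cong \mathcal{O}_{X_4}(1)$, so once smoothness is established $X_4$ is automatically a smooth Fano threefold with $(-K_{X_4})^3 = 4$, and a smooth quartic threefold in $\mathbb{P}^4$ is by definition family 1.2 of the Iskovskikh--Mori--Mukai classification. Hence the entire content of the proof is the verification of smoothness.

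To verify smoothness, I would show that the Jacobian ideal $\mathfrak{a} = (\partial f/\partial x_i)_{i=0}^{4} \subseteq \mathbb{C}[x_0,\ldots,x_4]$ has no common zero in $\mathbb{P}^4$; by Euler's identity this is equivalent to $\Sing(X_4) = \varnothing$. Write the five partials of $f$ from (\ref{eq:4.1}) explicitly and then, exactly as in the sextic case, carry out two Gr\"obner basis computations. First, check that $\mathfrak{a} + (x_4 - 1) = (1)$, which forces $x_4 = 0$ at any common zero. Second, compute a Gr\"obner basis for $\mathfrak{a} + (x_4)$ and exhibit pure powers of each remaining variable $x_0, x_1, x_2, x_3$ among its elements; combined with the first step, this leaves only the origin as a common solution, which is not a point of $\mathbb{P}^4$.

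The only obstacle is computational: one must confirm that the specific coefficients in (\ref{eq:4.1}) -- in particular the factor $2$ multiplying $x_4 x_1^3$ and the mixed monomial $x_0 x_1 x_2 x_3$ -- are chosen so that no unwanted cancellation creates a spurious singular point. Since the equation is visibly engineered to produce the $T_{3,3,4}$ singularity on the hyperplane section $\{x_4 = 0\}$ that will be analysed next, and the auxiliary cubic terms are included precisely to rigidify the threefold away from this locus, the Gr\"obner computations are expected to parallel Comp.~\ref{fig:calc1} and Comp.~\ref{fig:calc2} of the previous section without further conceptual difficulty.
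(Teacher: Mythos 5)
Your proposal matches the paper's proof: adjunction for the Fano property, then smoothness via two Gr\"obner basis computations showing $\mathfrak{a}+(x_4-1)=(1)$ and that $\mathfrak{a}+(x_4)$ cuts out only the origin. The only cosmetic difference is that the paper's basis for $\mathfrak{a}+(x_4)$ contains pure powers only of $x_3$ and $x_2$, with $x_1$ and $x_0$ then eliminated via the elements $x_1^3+x_2^3+x_1x_2x_3$ and $x_0^3-2x_2^3-2x_1x_2x_3$ rather than via pure powers, but the logic is the same.
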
 
\begin{proof}
By adjunction formula, $X_4$ is a Fano variety. We need to check whether the Jacobian matrix has rank 1 at every point of $X_4$, which is equivalent to showing that the following system of equations has no solutions in \P{4}:
\begin{gather*}
\label{sys30}
\begin{cases}
    \pfrac{f}{x_0} = x_1^3 + x_2^3 + x_1x_2x_3 + 3x_0^2x_4 = 0,\\
    \pfrac{f}{x_1} = 3x_1^2x_0 + x_0x_2x_3 + 2x_4x_1^2 = 0,\\ 
    \pfrac{f}{x_2} =3x_2^2x_0 + x_0x_1x_3 = 0,\\
    \pfrac{f}{x_3} = 4x_3^3+x_0x_1x_2 = 0,\\
    \pfrac{f}{x_4} =x_0^3 + 2x_1^3 + 4x_4^3 = 0.
\end{cases}
\end{gather*}
Consider an ideal $\mathfrak{a} =  (\frac{\partial f}{\partial x_i})_{i=0}^4 \subseteq \mathbb{C}[x_0,x_1,x_2,x_3, x_4]$. The Gr\"obner basis computation Comp.~\ref{fig:calc4} shows that $\mathfrak{a} + (x_4 - 1) = (1)$. Therefore, for the solution of the system \eqref{sys20} it is only possible to have $x_4 = 0$. A Gr\"obner basis for the ideal $\mathfrak{a} + (x_4)$ is computed in Comp.~\ref{fig:calc3}. In particular, 
\begin{equation*}
    \{x_4,\, x_3^4,\, x_2^7,\, x_1^3 + x_2^3 + x_1x_2x_3,\, x_0^3 - 2x_2^3 - 2 x_1x_2x_3\} \subseteq \mathfrak{a} + (x_4).    
\end{equation*} From here we conclude that the only solution of the system \eqref{sys20} is $x_4 = x_3 = x_2 = x_1 = x_0 = 0$, so there is no solutions in $\P{4}$. 
\end{proof}
Now we consider a hyperplane section $H = X_4 \cap \{x_4 = 0\}$ as a boundary divisor.
\begin{prop} \label{prop:3_3}
    The pair $(X_{4},H)$ is a Calabi-Yau pair with $\dim \mathcal{D}(X_{4}, H) =~2.$ 
\end{prop}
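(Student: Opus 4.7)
The plan is to identify a $T_{3,3,4}$ cusp singularity on $H$, perform a single blow-up of $X_4$ at that point, and then invoke Lemma~\ref{lem:4.3}. The Calabi--Yau condition is routine: as a hyperplane section of the smooth quartic $X_4$, we have $H\in|-K_{X_4}|$ by adjunction; and once $H$ is known to have only log canonical singularities, $(X_4,H)$ is lc by inversion of adjunction.

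Consider the point $P=(1{:}0{:}0{:}0{:}0)\in H$. Since $\partial f/\partial x_4 = x_0^3+2x_1^3+4x_4^3$ does not vanish at $P$, the implicit function theorem makes $(x_1,x_2,x_3)$ local analytic coordinates on $X_4$ near $P$, in which the local equation of $H$ takes the form
\[
x_1^3+x_2^3+x_3^4+x_1x_2x_3+(\text{higher-order terms})=0.
\]
The degree-three part $x_1^3+x_2^3+x_1x_2x_3$ shows that $\operatorname{mult}_P H=3$ and, together with the quartic term $x_3^4$, realizes the canonical $T_{3,3,4}$ cusp form ($1/3+1/3+1/4<1$). Cusp singularities are log canonical, and a Gr\"obner-basis check entirely parallel to the smoothness verification for $X_4$ above confirms that $P$ is the only singular point of $H$. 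Hence $(X_4,H)$ is a Calabi--Yau pair.

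Now let $\sigma\colon X'\to X_4$ be the standard blow-up of $P$, with exceptional divisor $E\cong\P{2}$. From $\operatorname{mult}_P H=3$ and $K_{X'}=\sigma^*K_{X_4}+2E$ one obtains $\sigma^*(K_{X_4}+H)=K_{X'}+H'+E$, so $(X',H'+E)$ is the crepant log pullback of $(X_4,H)$. In the affine chart given by $x_1=uc$, $x_2=vc$, $x_3=c$, factoring out $c^3$ yields the equation of $H'$ in the form
\[
u^3+v^3+uv+c+c\cdot\psi(u,v,c)=0
\]
for some analytic $\psi$; this is smooth at the origin since the coefficient of $c$ is $1$, and $H'\cap E=\{c=0,\ u^3+v^3+uv=0\}$ is a nodal plane cubic with a single node at the origin. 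Analogous symmetric computations in the remaining two affine charts confirm that $H'$ is smooth everywhere and that globally $H'\cap E$ is the irreducible plane cubic $\{U^3+V^3+UVW=0\}\subset E\cong\P{2}$.

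Finally, $X'$ is smooth, $H'$ and $E$ are both smooth surfaces, and $H'\cap E$ is a rational curve of arithmetic genus $1$ with exactly one node lying in the smooth locus of each. Lemma~\ref{lem:4.3} then gives $\dim\mathcal{D}(X',H'+E)=2$, and crepant invariance of the dual complex yields $\dim\mathcal{D}(X_4,H)=2$, as required. The only genuinely technical step is the chart-level blow-up computation; all other ingredients are formal.
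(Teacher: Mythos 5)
Your overall strategy --- isolate the $T_{3,3,4}$ point at $p=(1{:}0{:}0{:}0{:}0)$, blow it up once, check that the log pullback is $K_{X'}+H'+E$, and feed the nodal cubic $H'\cap E=\{U^3+V^3+UVW=0\}$ into Lemma~\ref{lem:4.3} --- is exactly the paper's, and those parts of your computation are correct.

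There is, however, one concrete false step: the claim that ``a Gr\"obner-basis check \dots{} confirms that $P$ is the only singular point of $H$.'' That check, if carried out, would \emph{not} confirm this. The surface $g=x_0x_1^3+x_0x_2^3+x_3^4+x_0x_1x_2x_3=0$ has three further singular points $q_k=(0{:}1{:}{-\varepsilon_3^{k}}{:}0{:}0)$, $k=0,1,2$, where $\varepsilon_3$ is a primitive cube root of unity: at these points $x_0=x_3=0$ kills $\partial g/\partial x_1$, $\partial g/\partial x_2$, $\partial g/\partial x_3$, while $\partial g/\partial x_0=x_1^3+x_2^3+x_1x_2x_3=1-\varepsilon_3^{3k}=0$. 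Consequently your later assertion that ``$H'$ is smooth everywhere'' is also false, since the blow-up at $p$ does not touch the $q_k$. The conclusion survives only because of facts you did not verify: the paper shows by a local computation that each $q_k$ is an $A_3$ singularity, which is du Val, hence canonical, so inversion of adjunction still yields that $(X_4,H)$ is lc near the $q_k$; and Lemma~\ref{lem:4.3} explicitly tolerates du Val singularities on the two surfaces provided the curve $C=H'\cap E$ lies in their smooth loci --- which it does, the $q_k$ being contained in $\{x_0=0\}$, far from $p$. To repair the proof you must locate and classify the $q_k$ (as the paper does) and replace ``smooth everywhere'' by ``at worst du Val, and smooth along $C$''; as written, your argument rests on a verification that would fail.
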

\begin{proof}
    The equation which defines the hyperplane section $H$ in $\P{3} = \{x_4 = 0\}$ is obtained by substituting $x_4 = 0$ into $\eqref{eq:4.1}$
    \begin{equation*}
        g = x_0x_1^3 + x_0x_2^3 + x_3^4 + x_0x_1x_2x_3 = 0.    
    \end{equation*}
    Find all singular points of this surface from the following system of equations:
    \begin{gather*}
    \label{sys31}
    \begin{cases}
        \pfrac{g}{x_0} = x_1^3 + x_2^3 + x_1x_2x_3 = 0,\\
        \pfrac{g}{x_1} = 3x_1^2x_0 + x_0x_2x_3  = 0,\\ 
        \pfrac{g}{x_2} = 3x_2^2x_0 + x_0x_1x_3 = 0,\\
        \pfrac{g}{x_3} = 4x_3^3+x_0x_1x_2 = 0.
    \end{cases}
    \end{gather*}
    Solving the system, we get 4 points: $p = (1:0:0:0:0),\, q_k =  (0:1:-\varepsilon_3^{k}:0:0)$, where $\varepsilon_3$ is a primitive 3th root of unity and $k = 0,\,1,\, 2$. In the chart $U_0 = \{x_0 \neq 1\}$ we see that point $p$ is of the type $T_{3,3,4}$:
    \begin{equation*}
         x_1^3 + x_2^3 + x_3^4 + x_1x_2x_3 = 0.   
    \end{equation*}
   Points $q_k$ are singularities of the type $A_3$, which can be seen in the chart $U_1 = \{x_1 \neq 0\}$, where the hyperplane section is defined as follows:
   \begin{equation*}
    x_0 + x_0x_2^3 + x_3^4 + x_0x_2x_3 = 0.     
   \end{equation*} Make the change of coordinates $x_2 = x_2' - \varepsilon_3^k$ so the point $q_k$ has coordinates $(0,0,0)$:
    \begin{equation*}
        x_0(3\varepsilon_3^{2k}x_2' - \varepsilon_3^k x_3) - 3x_0x_2'^2\varepsilon_3^k + x_0x_2'x_3 +  x_0x_2'^3 + x_3^4 = 0.
    \end{equation*}
    Now make the following invertible linear change of coordinates: 
    \begin{align*}
    &x = x_0, \\
    &y = 3\varepsilon_3^{2k}x_2' - \varepsilon_3^k x_3, \\
    &z = 3\varepsilon_3^{2k}x_2' + \varepsilon_3^k x_3,
    \end{align*}
    \begin{equation*}
        xy - \frac{xy^2}{6} - \frac{xyz}{6} + x\sfrac{y+z}{6}^3 + \varepsilon_3^{2k}\sfrac{z-y}{2}^4 = 0.
    \end{equation*}
    This singularity is locally-analytically equivalent to the $x'y' = z'^4$ singularity, which can be shown similarly to the proof of the Lemma \ref{lem:2.2}. Therefore, $H$ has one $T_{3,3,4}$ and three $A_3$ singularities.
    
    Thus, $(X_4,\, H)$ is an lc pair by inversion of adjunction. Since $H \in \left|-K_{X_4}\right|$, the pair is Calabi-Yau. Now consider a blow up of the point $p$:
    \begin{equation*}
        \sigma \colon X' \to X_4.
    \end{equation*}
    X is a smooth threefold and exceptional divisor is $E \cong \P{2}$. This blow up provides the minimal resolution of the $T_{3,3,4}$ singularity. Note that the proper transform of $H$ is a smooth divisor $D$ and intersection $D \cap E$ is a rational curve of arithmetic genus one with one node. Thus, by Lemma \ref{lem:4.3} we have
    $\dim \mathcal{D}(X', D + E) =  \dim \mathcal{D}(X_4, H) =  2$.
\end{proof}
\section{Intersection of a quadric and a cubic in \P{5}}
In this section, we construct a smooth Fano threefold $X_{2,3}$ in the family $1.3$ and a boundary $H$ on it such that $\mathrm{coreg}(X_{2,3}, H)=0$. Let \[X_{2,3} = \{f_1 = f_2 = 0\}\subseteq \P{5}\] be the intersection of the following quadric and cubic:
\begin{gather}
\label{sys0}
\begin{cases}
    x_0^2 + x_1x_2 = x_3^2+x_4^2+x_5^2,\\
    x_0^3 + x_0x_1^2+x_0x_2^2+x_1x_2^2+x_3x_4x_5 = 0.
\end{cases}
\end{gather}
\begin{prop} $X_{2,3}$ is a smooth Fano threefold from the family 1.3. 
\end{prop}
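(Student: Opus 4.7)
The plan is to follow the same template as the previous two propositions in the paper: deduce the Fano property from adjunction, confirm the deformation family from the numerical invariants, and reduce smoothness to a Gröbner basis computation showing that the singular locus is empty.

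First I would treat the Fano property. A smooth complete intersection $X_{2,3} \subseteq \mathbb{P}^5$ of type $(2,3)$ has, by the adjunction formula,
\begin{equation*}
    K_{X_{2,3}} = \bigl(K_{\mathbb{P}^5} + \mathcal{O}(2) + \mathcal{O}(3)\bigr)\bigr|_{X_{2,3}} = \mathcal{O}_{X_{2,3}}(-1),
\end{equation*}
so $-K_{X_{2,3}}$ is ample. The Lefschetz hyperplane theorem gives $\Pic(X_{2,3}) \cong \mathbb{Z}\cdot \mathcal{O}_{X_{2,3}}(1)$ and $(-K_{X_{2,3}})^3 = 2\cdot 3\cdot 1 = 6$, which identifies the deformation family as $1.3$ in the Iskovskikh–Mori–Mukai list, once smoothness is established.

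The core of the proof is thus the smoothness check. The singular locus of $X_{2,3}$ is cut out by the ideal $\mathfrak{b} = (f_1, f_2) + I_2(J)$, where $J$ is the $2\times 6$ Jacobian matrix of $(f_1,f_2)$ and $I_2(J)$ is the ideal generated by its $2\times 2$ minors. Explicitly, the columns of $J$ are
\begin{equation*}
    \bigl(2x_0,\; x_2,\; x_1,\; -2x_3,\; -2x_4,\; -2x_5\bigr)
\end{equation*}
and
\begin{equation*}
    \bigl(3x_0^2+x_1^2+x_2^2,\; 2x_0x_1+x_2^2,\; 2x_0x_2+2x_1x_2,\; x_4x_5,\; x_3x_5,\; x_3x_4\bigr).
\end{equation*}
Smoothness of $X_{2,3}$ is equivalent to $V(\mathfrak{b}) = \varnothing$ in $\mathbb{P}^5$, i.e.\ to the saturation of $\mathfrak{b}$ with respect to the irrelevant ideal being the unit ideal.

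Mimicking the strategy used for the sextic double solid and the quartic, I would then perform Gröbner basis computations in $\mathbb{C}[x_0,\ldots,x_5]$: for each coordinate $x_i$ in turn, show that $\mathfrak{b} + (x_i - 1) = (1)$, thereby excluding the affine chart $\{x_i \neq 0\}$, or else reduce to showing that $\mathfrak{b} + (x_i)$ has no common zero outside the vanishing of the remaining coordinates. Iterating through the $6$ coordinates in a convenient order will peel off cases until one is left only with the origin, which is not a point of $\mathbb{P}^5$.

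The main obstacle I expect is computational rather than conceptual: the $2\times 6$ Jacobian yields $\binom{6}{2} = 15$ quadratic or cubic minors on top of $f_1$ and $f_2$, so the Gröbner basis computations are considerably heavier than in the preceding sections and one must choose the monomial order and the order in which coordinates are fixed to keep them tractable. Apart from this, the argument is routine and parallels the earlier propositions verbatim, so the write-up will consist of displaying the defining polynomials, the Jacobian minors, and citing the explicit Gröbner basis computations as appendix-style computations analogous to Comp.~\ref{fig:calc1}--\ref{fig:calc4}.
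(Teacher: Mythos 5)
Your proposal is correct and rests on the same two pillars as the paper's proof: adjunction for the Fano property and a Gr\"obner-basis verification that the Jacobian of $(f_1,f_2)$ has rank $2$ at every point of $X_{2,3}$. The difference is in how the rank condition is encoded. You take the textbook route, adjoining all $\binom{6}{2}=15$ of the $2\times 2$ minors to $(f_1,f_2)$ and saturating chart by chart; this is perfectly valid but, as you anticipate, computationally the heaviest option. The paper instead first disposes of the rank-$\leq 1$ locus by elementary geometry: the quadric $\{f_1=0\}$ is smooth, so $Df_1\neq 0$ everywhere, and the cubic $\{f_2=0\}$ has exactly three singular points, none of which lie on the quadric, so $Df_2\neq 0$ on $X_{2,3}$. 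Degeneracy is then equivalent to a proportionality $Df_1=\lambda Df_2$ with $\lambda\neq 0$, which is a system in the original variables plus one auxiliary multiplier $\lambda$; the paper further exploits the symmetry in $x_3,x_4,x_5$ by passing to the elementary symmetric functions $\alpha_1,\alpha_2,\alpha_3$, splitting into the two cases $\alpha_3=0$ and $\alpha_3=-8/\lambda^3$, each handled by a small Gr\"obner computation. What the paper's route buys is much smaller ideals fed to the computer; what your route buys is robustness and uniformity (no preliminary geometric observations about the individual hypersurfaces are needed). Your identification of the family via $\Pic(X_{2,3})\cong\mathbb{Z}$ and $(-K_{X_{2,3}})^3=6$ is a point the paper leaves implicit, and is a welcome addition.
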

\begin{proof}
By adjunction formula, $X_{2,3}$ is a Fano variety. To prove that $X_{2,3}$ is smooth, it is sufficient to show that for all $x \in X_{2,3}$ with the homogeneous coordinates $x_i$, the Jacobian matrix has rank 2:
\begin{gather*}
    \begin{bmatrix}
        Df_1 \\
        Df_2 \\
    \end{bmatrix} = 
    \begin{bmatrix}
        & 2x_0 & x_2 & x_1 & -2x_3 & -2x_4 & -2x_5 &\\
        & 3x_0^2 + x_1^2 +x_2^2 & 2x_1x_0+x_2^2 & 2x_2(x_0+x_1) & x_4x_5 & x_3x_5 & x_3x_4 &\\
    \end{bmatrix}.
\end{gather*}
Note that the quadric $\{f_1 = 0\}$ is smooth, in particular $Df_1(x) \neq 0$ for any $x \in X_{2,3}$. The qubic $\{f_2 = 0\}$ has three singular points:
\begin{gather*}
p_1 = (0:0:0:1:0:0),\\
p_2 = (0:0:0:0:1:0),\\
p_3 = (0:0:0:0:0:1).
\end{gather*}
They don't lie on the quadric, so $Df_2(x) \neq 0$ for any $x \in X_{2,3}$.
Therefore, if the Jacobian matrix is degenerate on $X_{2,3}$, then $Df_1 = \lambda Df_2,\, \lambda \neq 0,\, \lambda \in \mathbb{C}$. Write this condition as the following system of equations:
\begin{equation} \label{sys1}
\begin{cases}{}
      x_0^2 + x_1x_2 = x_3^2+x_4^2+x_5^2,\\
      x_0^3 + x_0x_1^2+x_0x_2^2+x_1x_2^2+x_3x_4x_5 = 0,\\
      2x_0 = \lambda (3x_0^2 + x_1^2 +x_2^2), \\
      x_2 = \lambda (2x_1x_0+x_2^2), \\
      x_1 = 2\lambda x_2(x_0+x_1), \\
      -2x_3 = \lambda x_4x_5, \\ 
      -2x_4 = \lambda x_3x_5, \\ 
      -2x_5 = \lambda x_3x_4.     
\end{cases}
\end{equation}
We can simplify this system. Make the following substitution:
\begin{equation*}
\begin{aligned}
    & \alpha_1 = x_3 + x_4 + x_5, \\
    & \alpha_2 = x_3x_4 + x_3x_5 + x_4x_5, \\
    & \alpha_3 = x_3x_4x_5, \\
    & x_3^2 + x_4^2 + x_5^2 = \alpha_1^2 - 2\alpha_2.\\
\end{aligned}
\end{equation*}
Then from the last three equations of \eqref{sys1} we have: 
\begin{equation}
\begin{aligned} \label{sys:5.1}
     &-2\alpha_1 = \lambda \alpha_2, \\
     &-8\alpha_3 = \lambda^3 \alpha_3^2, \\
     &4\alpha_2 = \lambda^2 \alpha_3\alpha_1,\\
     &3\alpha_3\lambda = -2(\alpha_1^2 -2\alpha_2).
\end{aligned}
\end{equation}
From here, since $\alpha_3(\lambda^3\alpha_3 + 8) = 0$, we need to consider two cases:\\
1)\, $\alpha_3 = 0.$ \\
In this case from \eqref{sys:5.1} we get $\alpha_2 = \alpha_1 = 0$ and we can rewrite the system \eqref{sys1} as:
\begin{equation*}
\begin{cases} 
      x_0^2 + x_1x_2 = 0,\\
      x_0^3 + x_0x_1^2+x_0x_2^2+x_1x_2^2 = 0, \\
      2x_0 = \lambda (3x_0^2 + x_1^2 +x_2^2), \\
      x_2 = \lambda (2x_1x_0+x_2^2), \\
      x_1 = 2\lambda x_2(x_0+x_1). \\   
\end{cases}
\end{equation*}
Consider the ideal $\mathfrak{a} \subseteq \mathbb{C}[x_0, x_1, x_2, \lambda]$ generated by the polynomials of the system. A Gr\"obner basis of the ideal $\mathfrak{a}$ is $\{x_0, x_1, x_2\}$, as computed in Comp.~\ref{fig:calc5}. then $x_0 = x_1 = x_2 = 0$. Since $\alpha_3 = \alpha_1 = \alpha_2 = 0$, we get $x_3 = x_4 = x_5 = 0$, so we have no solutions in $\P{6}$.
\\
2)\, $\alpha_3 = -\frac{8}{\lambda^3}$. \\
In this case we have the system:
\begin{equation*}
\begin{cases} 
      \lambda(x_0^2 + x_1x_2) = \lambda\alpha_1^2 +4\alpha_1,\\
      \lambda^3 (x_0^3 + x_0x_1^2+x_0x_2^2+x_1x_2^2) - 8  = 0, \\
      2x_0 = \lambda (3x_0^2 + x_1^2 +x_2^2), \\
      x_2 = \lambda (2x_1x_0+x_2^2), \\
      x_1 = 2\lambda x_2(x_0+x_1), \\
      2\alpha_1^2\lambda^2+8\lambda\alpha_1-24 = 0.
\end{cases}
\end{equation*}
Let $\mathfrak{b} \subseteq \mathbb{C}[x_0,x_1,x_2,\alpha_1, \lambda]$ be the ideal generated by the polynomials of the system. By the computation Comp.~\ref{fig:calc6}, $\mathfrak{b} = (1)$, so the system is incompatible. Therefore, $X_{2,3}$ is a smooth threefold. 
\end{proof}
Now we consider the hyperplane section $H = X_{2,3} \cap \{x_0 = 0\}$ as a boundary divisor.
\begin{prop} \label{prop:5.5}
The pair $(X_{2,3},H)$ is a Calabi-Yau pair with $\dim \mathcal{D}(X_{2,3}, H) =2$. 
\end{prop}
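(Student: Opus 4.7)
The plan follows the pattern of Proposition~\ref{prop:3_3} and of the sextic double solid case: locate and classify the singularities of the hyperplane section $H = X_{2,3} \cap \{x_0 = 0\}$, invoke inversion of adjunction together with $H \in |-K_{X_{2,3}}|$ for the lc and Calabi-Yau properties, and then produce a dlt modification of $(X_{2,3}, H)$ exhibiting a zero-dimensional lc stratum via a short sequence of explicit blow-ups of the threefold.

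First, I would study the singular locus of $H$. Setting $x_0 = 0$ in the defining equations presents $H \subset \P{4}$ as the complete intersection of $x_1 x_2 = x_3^2 + x_4^2 + x_5^2$ and $x_1 x_2^2 + x_3 x_4 x_5 = 0$. A Jacobian-criterion analysis, verified by a Gr\"obner-basis calculation in the style of the previous propositions if needed, should show that the singular locus of $H$ consists of exactly the two points $p = (1:0:0:0:0)$ and $p' = (0:1:0:0:0)$. In the affine chart $x_1 = 1$ about $p$, eliminating $x_2$ via the quadric yields the local equation
\[
x_3 x_4 x_5 + (x_3^2 + x_4^2 + x_5^2)^2 = 0,
\]
whose expansion meets the hypotheses of Lemma~\ref{lem:2.2} with $p = q = r = 4$; this identifies $p$ as a $T_{4,4,4}$ cusp. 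In the chart $x_2 = 1$ about $p'$, an analogous reduction gives the local equation $x_3^2 + x_4^2 + x_5^2 + x_3 x_4 x_5 = 0$, whose quadratic leading term is non-degenerate, so $p'$ is an $A_1$ double point.

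Since both $T_{4,4,4}$ and $A_1$ are log canonical surface singularities, $(X_{2,3}, H)$ is lc by inversion of adjunction; and since $-K_{X_{2,3}} \sim \O(1)|_{X_{2,3}}$, the hyperplane section $H$ lies in $|-K_{X_{2,3}}|$, so the pair is Calabi-Yau.

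The heart of the proof is the verification that $\dim \mathcal{D}(X_{2,3}, H) = 2$, where I expect the principal difficulty. Blow up the $T_{4,4,4}$ point, $\sigma_1 \colon X' \to X_{2,3}$, with exceptional divisor $E_1 \cong \P{2}$. The multiplicity of $H$ at $p$ equals $3$ (from the cubic tangent cone $x_3 x_4 x_5$) while the blow-up discrepancy is $2$, so the log pullback is $(X', H' + E_1)$ with both components of coefficient $1$. The strict transform $H'$ is smooth near the exceptional set, and $H' \cap E_1$ is the triangle of lines $\{x_3 x_4 x_5 = 0\} \subset E_1$, along whose three corners $H'$ and $E_1$ share a common tangent plane. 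I would then blow up one edge of this triangle, say the line $m \subset E_1$ defined by $x_4 = 0$, producing $\sigma_2 \colon X'' \to X'$ with exceptional divisor $F_1$, a $\P{1}$-bundle over $m$. Both $H'$ and $E_1$ contain $m$ with multiplicity one while the discrepancy of a smooth-curve blow-up in a smooth threefold is $1$, so the log pullback on $X''$ is $(X'', H'' + E_1'' + F_1)$ with all three components of coefficient $1$. A local chart computation at the preimage of a corner of $m$, say the point over $[1:0:0] \in E_1$, then shows that $F_1$, $E_1''$ and $H''$ meet transversally at a single point, producing a $2$-simplex in $\mathcal{D}(X_{2,3}, H)$. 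This gives $\dim \mathcal{D}(X_{2,3}, H) \geq 2$; the opposite inequality is automatic since $\dim X_{2,3} - 1 = 2$. Any further blow-ups needed to achieve a full log resolution (e.g., to treat the remaining tangency at the third triangle corner lying off $m$) would only extract exceptional divisors of log discrepancy $1$, that is, of coefficient $0$ in the log pullback, and therefore would not alter the dual complex. The main obstacle is the explicit verification in charts that after $\sigma_2$ the tangencies along $m$ are resolved and that the three divisors meet transversally at the expected point; the bookkeeping of discrepancies and multiplicities mirrors the one carried out for the sextic double solid.
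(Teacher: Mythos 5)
Your proposal is correct and follows the paper's overall strategy: the same two singular points of $H$ (the $T_{4,4,4}$ cusp at $(0{:}1{:}0{:}0{:}0{:}0)$, identified via the local equation $x_3x_4x_5+(x_3^2+x_4^2+x_5^2)^2=0$ and Lemma \ref{lem:2.2}, and the ordinary double point at $(0{:}0{:}1{:}0{:}0{:}0)$), inversion of adjunction plus $H\in|-K_{X_{2,3}}|$ for the Calabi--Yau property, and an explicit partial resolution over the cusp exhibiting a zero-dimensional stratum. The one genuine divergence is the blow-up sequence after $\sigma_1$: the paper first blows up the three corners of the triangle $H'\cap E$ (extracting coefficient-$0$ planes $E_i$, since $1+1-2=0$) and then the three common lines $L_i=E'\cap E_i=H''\cap E_i$ to obtain coefficient-$1$ divisors $F_i$ and a dlt model with a $0$-stratum, whereas you blow up one edge $m$ of the triangle directly and check that $H''$, $E''$, $F_1$ meet transversally over a corner of $m$. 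Your bookkeeping is right ($\mathrm{mult}_m(H'+E)=2$ minus discrepancy $1$ gives $F_1$ coefficient $1$, and in the chart $x_3=us$ the three normals $dv+ds$, $ds$, $du$ are independent), and this route is, if anything, more economical since a single curve blow-up already produces an snc triple point. One small imprecision: your closing claim that any further blow-ups toward a full log resolution extract only divisors of log discrepancy $1$ is not literally true --- resolving the remaining tangency at the corner of the triangle not lying on $m$ again produces a log discrepancy-$0$ divisor --- but this is harmless, because the pair is already snc near your transversal triple point, so a log resolution can be taken to be an isomorphism there and the $2$-simplex persists, giving $\dim\mathcal{D}(X_{2,3},H)\geq 2$; the reverse inequality is automatic as you note.
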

\begin{proof}
Firstly, we show that the hyperplane section has two singular points $p = (0:1:0:0:0:0)$ and $q = (0:0:1:0:0:0)$. We check where the following Jacobian matrix has rank 2:
\begin{gather*}
    \begin{bmatrix}
         Df_1 \\
         Df_2 \\
         Dx_0 \\ 
    \end{bmatrix} = 
    \begin{bmatrix}
        & 2x_0 & x_2 & x_1 & -2x_3 & -2x_4 & -2x_5 &\\
        & 3x_0^2 + x_1^2 +x_2^2 & 2x_1x_0+x_2^2 & 2x_2(x_0+x_1) & x_4x_5 & x_3x_5 & x_3x_4 &\\
        & 1 & 0 & 0 & 0 & 0 & 0 &\\
    \end{bmatrix}.
\end{gather*}
Since the first two rows are linearly independent on $X_{2,3}$, we have $\lambda Df_1 + \mu Df_2 = Dx_0$ for some $\lambda,\, \mu \in \mathbb{C}$. Write this condition as the following system of equations:
\begin{equation} 
\begin{cases}{}
     x_1x_2 = x_3^2+x_4^2+x_5^2,\\
     x_1x_2^2+x_3x_4x_5 = 0,\\
        \mu(x_1^2 + x_2^2) = 1, \\
      \lambda x_2  + \mu x_2^2 = 0, \\
      \lambda x_1 + 2\mu x_2x_1 = 0, \\
      -2 \lambda x_3 + \mu x_4x_5 = 0, \\ 
      -2 \lambda x_4 + \mu x_3x_5 = 0, \\ 
      -2 \lambda x_5 + \mu x_3x_4 = 0.  
\end{cases}
\end{equation}
Consider the ideal $\mathfrak{a} \subseteq \mathbb{C}[x_1, x_2, x_3, x_4, x_5, \lambda, \mu]$ generated by the polynomials of the system. A Gr\"obner basis for this ideal is computed in Comp.~\ref{fig:calc7}. In particular, 
\begin{equation*}
    \{x_5^4,\, x_4^4,\, x_3^4,\, x_1x_2 - x_3^2 - x_4^2 - x_5^2\} \subseteq \mathfrak{a}.
\end{equation*}
So, $x_0 = x_3 = x_4 = x_5 = 0$ and $x_1x_2 = 0$. The solutions are as follows: 
\begin{align*}
    &p = (0:1:0:0:0:0),\quad \lambda = 0,\,\,\,\,\, \mu = 1, \\ &q = (0:0:1:0:0:0),\quad \lambda = -1,\, \mu = 1.  
\end{align*}
Note that in the chart $U_1 = \{x_1 \neq 0\}$ we can substitute $x_2 = x_3^2 + x_4^2 + x_5^2$ into the qubic equation \eqref{sys0} and get the equation of $X_{2,3} \cap H$ near the point $p = (0:1:0:0:0:0)$:
\begin{equation*}
    x_3x_4x_5 + x_3^4 + x_4^4 + x_5^4 + 2(x_3^2x_4^2 + x_3^2x_5^2 + x_4^2x_5^2) = 0.
\end{equation*}
By Lemma \ref{lem:2.2} the point $p$ is a singularity of the type $T_{4,4,4}$. 

Now we can see that $q$ is an ordinary double point by substituting $x_1 = x_3^2 + x_4^2 + x_5^2$ in the chart $U_2 = \{x_2 \neq 0\}$ into the cubic equation \eqref{sys0}:
\begin{equation*}
    x_3^2 + x_4^2 + x_5^2 + x_3x_4x_5 = 0.
\end{equation*} 
Since the corank of the second differential of this singularity is zero, we recognize it as an ordinary double point. Hence the pair $(X_{2,3},\,H)$ is lc by inversion of adjunction. Since $H \in \left|-K_{X_{2,3}}\right|$, the pair is Calabi-Yau. Consider a blow up of the point $p$:
\[\sigma_1 \colon X' \to X_{2,3}.\]
Let $H'$ be a strict transform of the $H$ and let $E \cong \P{2}$ be an exceptional divisor. Then the pair $(X',E+H')$ is the log pullback of the pair $(X_{2,3}, H)$. Note that the restriction $\sigma_1|_{H'} \colon H' \to H$ provides the minimal resolution of the $T_{4,4,4}$ singularity from Fig. \ref{fig:tpqr}. The intersection $H'\cap E$ is a cycle of three smooth rational curves $C_1 + C_2 + C_3$. Let $p_1, p_2, p_3$ be the intersection points of the each pair of curves $C_i\cap C_j$. Consider a composition of blow ups in the points $p_i,\, i = 1,\,2,\,3$:
\[\sigma_2 \colon X'' \to X'.\]
Let $E_i \cong \P{2}$ be the exceptional divisors over $p_i,\, i = 1,\, 2,\, 3$, and let $H'',\, E'$ be the strict transforms of the divisors $H',\, E$, respectively. The pair $(X'',H'' + E')$ is the log pullback of the pair $(X', H' + E)$. Note that $E_i$ intersect $H''$ and $E'$ in a common curve: $L_i = E' \cap E_i = H'' \cap E_i$. Consider a composition of blow ups of the curves $L_i,\, i = 1,\,2,\,3$:
\[\sigma_3 \colon X''' \to X''.\]
Let $F_i$ be the exceptional divisors over $L_i,\, i = 1,\, 2,\, 3$, and $H''',\, E''$ -- the strict transforms of the divisors $H'',\, E'$, respectively. The pair $(X''',H''' + E'' 
+ F_1 + F_2 + F_3)$  is the log pullback of the pair $(X'', H'' + E')$. Note that the pair $(X''',H''' + E'' 
+ F_1 + F_2 + F_3)$ is dlt and its boundary admits a zero-dimensional stratum, so the equality $\dim \mathcal{D}(X_{2,3}, H) = 2$ follows. 
\end{proof}
\section{Intersection of three quadrics in \P{6}}
In this section, we construct a smooth Fano threefold $X_{2,2,2}$ in the family $1.4$ and a boundary $H$ on it such that $\mathrm{coreg}(X_{2,2,2}, H)=0$. Let \[X_{2,2,2}  =  \{f_1 = f_2 = f_3 = 0\}\subseteq \P{6}\] be the intersection of the three following quadrics :
\begin{gather}
\label{sys50}
\begin{cases}
    x_0x_2 + x_1x_3 = x_4^2+x_5^2+x_6^2,\\
    x_0x_3 + x_1x_2 = x_5x_6,\\
    x_3^2 + x_2x_4 + x_0(2x_0 + x_1 + x_5 + x_6) = 0.
\end{cases}
\end{gather}
\begin{prop} $X_{2,2,2}$ is a smooth Fano threefold from the family 1.4. 
\end{prop}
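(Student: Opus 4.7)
The strategy mirrors the three preceding smoothness verifications in this paper. First, by the adjunction formula, since $X_{2,2,2}$ is a complete intersection of three quadrics in $\P{6}$ with total degree $6 < 7$, one has $-K_{X_{2,2,2}} \sim \mathcal{O}_X(1)$, which is ample, so $X_{2,2,2}$ is Fano. Its anticanonical degree is $(-K_X)^3 = 2\cdot 2\cdot 2 = 8$ and its Picard rank is $1$ (via the Lefschetz hyperplane theorem applied to a smooth complete intersection of dimension $\geq 3$), so $X_{2,2,2}$ belongs to family $1.4$ in the Iskovskikh--Mori--Mukai classification.

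For smoothness, I would show that the Jacobian matrix
\[
J(x) = \begin{bmatrix} \nabla f_1(x) \\ \nabla f_2(x) \\ \nabla f_3(x) \end{bmatrix}
\]
has rank $3$ at every point of $X_{2,2,2}$. Equivalently, the ideal $\mathfrak{a} \subseteq \mathbb{C}[x_0,\ldots,x_6]$ generated by $f_1, f_2, f_3$ together with all $3\times 3$ minors of $J$ must define the empty subscheme of $\P{6}$. Following the template of the earlier propositions, I would reduce to a finite set of affine Gr\"obner basis verifications: for each $i \in \{0,\ldots,6\}$, adjoin $x_i - 1$ to $\mathfrak{a}$ and check that the resulting ideal in $\mathbb{C}[x_0,\ldots,\hat{x_i},\ldots,x_6]$ is the unit ideal, after which the only remaining potential solution is $x_0 = \cdots = x_6 = 0$, which is excluded in $\P{6}$.

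The main obstacle is the size of the computation. The matrix $J$ has $\binom{7}{3} = 35$ maximal minors, each a cubic, making a direct Gr\"obner calculation heavy. Rather than enumerate minors, I would follow the trick the author employed for $X_{2,3}$ and parametrize the rank-deficient locus by Lagrange-type multipliers: any rank drop produces a nontrivial triple $(\lambda_1 : \lambda_2 : \lambda_3) \in \P{2}$ with $\lambda_1 \nabla f_1 + \lambda_2 \nabla f_2 + \lambda_3 \nabla f_3 = 0$, which gives only seven scalar relations in nine variables. Moreover, the defining equations \eqref{sys50} are symmetric under the involution $x_5 \leftrightarrow x_6$, so the auxiliary substitution $\sigma = x_5 + x_6$, $\tau = x_5 x_6$ should shrink the Gr\"obner basis considerably. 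As in the preceding propositions, the final step is to discharge the resulting finite collection of Gr\"obner computations via labelled \texttt{Comp.} references, showing that each of the relevant ideals collapses to $(1)$.
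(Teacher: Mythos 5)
Your proposal is correct and follows essentially the same route as the paper: adjunction for the Fano property, and smoothness via expressing a rank drop of the Jacobian as a linear dependence among the gradient rows with multipliers, discharged by Gr\"obner basis computations. The only (minor) difference is that the paper first shows by hand that $Df_1\neq 0$ (the quadric $\{f_1=0\}$ is smooth), that $Df_2\neq 0$ (its unique singular point misses $X_{2,2,2}$), and that $Df_1,Df_2$ are independent on $X_{2,2,2}$, so it can normalize to the single affine relation $Df_3=\lambda Df_1+\mu Df_2$ and run one Gr\"obner computation, whereas your projective triple $(\lambda_1:\lambda_2:\lambda_3)$ would require a chart-by-chart case split on the multipliers.
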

\begin{proof}
By adjunction formula, $X_{2,2,2}$ is a smooth Fano variety. To prove that $X_{2,2,2}$ is smooth it is sufficient to show that for all $x \in X_{2,2,2}$ with homogeneous coordinates $x_i$, the Jacobian matrix has rank 3:
\begin{gather*}
    \begin{bmatrix}
         Df_1 \\
         Df_2  \\
         Df_3  \\
    \end{bmatrix} = 
    \begin{bmatrix}
        & x_2 & x_3 & x_0 & x_1 & -2x_4 & -2x_5 & -2x_6 &\\
        & x_3 & x_2 & x_1 & x_0 & \,0 & -x_6 &-x_5&\\
        & 4x_0 + x_1 + x_5 + x_6 & x_0 & x_4 & 2x_3 & \,\,x_2 & \phantom{-}x_0 & \phantom{-}x_0 &\\
    \end{bmatrix}.
\end{gather*}
Notice that the quadric $\{f_1 = 0\}$ is smooth, so $Df_1 \neq 0$ on $X_{2,2,2}$. Then notice that the quadric $\{f_2 = 0\}$ is of corank $1$ and has one singular point $(0:0:0:1:0:0:0)$, but this point does not lie on $X_{2,2,2}$, so $Df_2 \neq 0$. We check that $Df_1$ and $Df_2$ are linearly independent on $X_{2,2,2}$. Suppose $Df_1 = \lambda Df_2$, $\lambda \neq 0$, then
\begin{equation} \label{eq:6.3}
\begin{cases}
    x_3 = \lambda x_2, \\
    x_2 = \lambda x_3, \\
    x_1 = \lambda x_0, \\
    x_0 = \lambda x_1, \\
    0= - 2 \lambda x_4, \\
    -2x_5 = -\lambda x_6, \\
    -2x_6 = -\lambda x_5,
\end{cases} \Rightarrow \quad
\begin{cases}
    x_3(1-\lambda^2) = 0,\\
    x_1(1-\lambda^2) = 0,\\
    x_5(4-\lambda^2) = 0, \\
    x_4 = 0.
\end{cases}
\end{equation}
Both $x_1$ and $x_3$ cannot be zero because then $x_2 = x_0 = x_4 = 0$ from \eqref{eq:6.3} and $x_5 = x_6 = 0$ from the $f_2$ equation \eqref{sys50}. So, $1-\lambda^2 = 0$, then $4 - \lambda^2 \neq 0$, and then $x_5 = x_6 = 0$. Now we substitute $x_4 = x_5 = x_6 = 0$ to $f_1=0$ and $f_2 = 0$ to get 
\begin{equation*}
    x_0x_3(1 + \lambda^2) = 0,\qquad x_0x_2(1 + \lambda^2) = 0.
\end{equation*}
Note that $1 + \lambda^2\neq0$ and  $x_0 \neq 0$ because then from $f_3=0$ we get $x_3^2 = 0$, so $x_3 = x_2 = x_1 = x_0 = x_4 = x_5 = x_6 = 0$. So, $x_2 = x_3 = 0$ and since $\lambda^2 = 1$, we get two candidates: 
\begin{align*}
    &p_{1,2} = (1:\pm1:0:0:0:0:0),
\end{align*}
but they don't satisfy equation $f_3 = 0$, so $Df_1 \neq \lambda Df_2$ on $X_{2,2,2}$. Now we show that $Df_3 \notin ~\left\langle Df_1,\,Df_2\right\rangle$. Suppose $Df_3 = \lambda Df_1 + \mu Df_2$ for some $\lambda,\,\mu \in \mathbb{C}$. This relation gives the system of equations:
\begin{equation*}
\begin{cases}
    x_0x_2 + x_1x_3 = x_4^2+x_5^2+x_6^2,\\
    x_0x_3 + x_1x_2 = x_5x_6,\\
    x_3^2 + x_2x_4 + x_0(2x_0 + x_1 + x_5 + x_6) = 0,\\
    4x_0 + x_1 + x_5 + x_6 = \lambda x_2 + \mu x_3, \\
    x_0 = \lambda x_3 + \mu x_2,\\
    x_4 = \lambda x_0 + \mu x_1, \\
    2x_3 = \lambda x_1 + \mu x_0, \\
    x_2 = -2\lambda x_4, \\
    x_0 = -2\lambda x_5 - \mu x_6, \\
    x_0 = -2\lambda x_6 - \mu x_5.
    \end{cases}
\end{equation*}
Consider the ideal $\mathfrak{a} \subseteq \mathbb{C}[x_0, x_1, x_2, x_3, x_4, x_5, x_6, \lambda, \mu]$ generated by the polynomials of the system. A Gr\"obner basis for this ideal is computed in Comp.~\ref{fig:calc8}. In particular, 
\begin{equation*}
    \{x_0^2,\, x_1^2,\, x_2^2,\, x_3^2,\, x_4^2,\, x_5x_6,\, x_5^2 + x_6^2\} \subseteq \mathfrak{a}.
\end{equation*}
This shows that $x_i = 0,\, \forall\, i = 0,\ldots,6$ so there is no solutions in $\P{6}$ and $X_{2,2,2}$ is smooth.
\end{proof}
Now we consider the hyperplane section $H = X_{2,2,2} \cap \{x_0 = 0\}$ as a boundary divisor. In $\P{5} = \{x_0 = 0\}$ the equations of $H$ are as follows:
\begin{equation*}
    \begin{cases}
        x_1x_3 = x_4^2+x_5^2+x_6^2,\\
        x_1x_2 = x_5x_6,\\
        x_3^2 + x_2x_4 = 0.    
    \end{cases}
\end{equation*}
\begin{prop}
The pair $(X_{2,2,2},H)$ is a Calabi-Yau pair with $\dim \mathcal{D}(X_{2,2,2}, H) =~2.$ 
\end{prop}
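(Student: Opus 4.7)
The plan is to mirror the argument of Proposition \ref{prop:5.5}: identify the singular locus of $H$, show that the worst singularity is a $T_{p,q,r}$ cusp, deduce the lc and Calabi-Yau properties by inversion of adjunction, and then run an explicit sequence of blow-ups producing a dlt log resolution whose dual complex acquires a zero-dimensional stratum.

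First I would determine the singular points of $H$ in $\P{5} = \{x_0 = 0\}$ via a Gr\"obner basis computation on the $3\times 6$ Jacobian of the three quadrics defining $H$, analogous to the calculations performed earlier in the paper for $X_{2,3}$. I expect exactly two singular points: $p = (0:1:0:0:0:0:0)$, of type $T_{4,4,4}$, and $q = (0:0:1:0:0:0:0)$, of du Val type $A_3$. To identify the singularity at $p$, I would work in the affine chart $x_1 = 1$ and solve $f_1 = f_2 = 0$ implicitly for $x_3, x_2$, obtaining
\[
x_3 = x_4^2 + x_5^2 + x_6^2 - x_0 x_2, \qquad x_2 = x_5 x_6 - x_0 x_3,
\]
which exhibits $(x_0, x_4, x_5, x_6)$ as local analytic coordinates on $X_{2,2,2}$ near $p$. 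Restricting to $H$ by setting $x_0 = 0$ and substituting into $f_3 = x_3^2 + x_2 x_4 = 0$ yields the local equation
\[
x_4 x_5 x_6 + (x_4^2 + x_5^2 + x_6^2)^2 = 0
\]
of $H$ near $p$. Expanding $(x_4^2+x_5^2+x_6^2)^2$ produces $x_4^4+x_5^4+x_6^4+2(x_4^2x_5^2+x_4^2x_6^2+x_5^2x_6^2)$, so this equation meets the hypotheses of Lemma \ref{lem:2.2} with all three exponents equal to $4$; hence $p$ is of type $T_{4,4,4}$. The point $q$ would be identified as $A_3$ by the analogous but simpler substitution in the chart $x_2 = 1$: after eliminating $x_1$ and $x_4$ using $f_1, f_2, f_3$, the resulting equation in three variables has non-degenerate quadratic part plus a quartic term in the remaining coordinate, which reduces to the $A_3$ normal form by completion of squares.

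With the singularities classified, $(X_{2,2,2}, H)$ is lc by inversion of adjunction and Calabi-Yau since $H \in |-K_{X_{2,2,2}}|$. For the dual complex I would copy verbatim the three-step blow-up procedure of Proposition \ref{prop:5.5}: first blow up $p$ to obtain $\sigma_1\colon X' \to X_{2,2,2}$ with exceptional $E \cong \P{2}$; since the tangent cone of the local $T_{4,4,4}$ equation is $x_4 x_5 x_6 = 0$, the intersection $H' \cap E$ is a triangle of three smooth rational curves meeting pairwise at three points $p_1, p_2, p_3$. Next, blow up these three nodes, producing exceptional divisors $E_i \cong \P{2}$, each meeting the strict transforms of $H'$ and $E$ in a common curve $L_i$; finally, blow up the three curves $L_i$. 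The resulting pair is dlt and its boundary admits a zero-dimensional stratum, forcing $\dim \mathcal{D}(X_{2,2,2}, H) = 2$.

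The main obstacle is the first step: confirming via Gr\"obner basis that the singular locus of $H$ consists of precisely the two isolated points $p$ and $q$ (so that no additional bad singularities contribute), and verifying that the implicit substitution at $p$ really produces an equation meeting the combinatorial hypotheses of Lemma \ref{lem:2.2}. Once these are settled, the birational-geometric part is a direct transcription of Proposition \ref{prop:5.5}.
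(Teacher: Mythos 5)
Your proposal is correct and follows essentially the same route as the paper: the same two singular points $p$ (type $T_{4,4,4}$, identified via the same substitution into $f_3$ and Lemma \ref{lem:2.2}) and $q$ (type $A_3$), lc by inversion of adjunction, and the identical three-step blow-up sequence borrowed from Proposition \ref{prop:5.5}. The Gr\"obner basis verification of the singular locus that you flag as the main obstacle is exactly what the paper carries out in its appendix computation.
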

\begin{proof}
Firstly, we show that the hyperplane section has two singular points:
\[
    p = (0:1:0:0:0:0:0),\quad q = (0:0:1:0:0:0:0).
\] We check where the following Jacobian matrix has rank 3:
\begin{gather*}
    \begin{bmatrix}
         Df_1 \\
         Df_2  \\
         Df_3  \\
         Dx_0
    \end{bmatrix} = 
    \begin{bmatrix}
        & x_2 & x_3 & x_0 & x_1 & -2x_4 & -2x_5 & -2x_6 &\\
        & x_3 & x_2 & x_1 & x_0 & \,0 & -x_6 &-x_5&\\
        & 4x_0 + x_1 + x_5 + x_6 & x_0 & x_4 & 2x_3 & \,\,x_2 & \phantom{-}x_0 & \phantom{-}x_0 &\\
        & 1 & 0 & 0 & 0 & 0 & \phantom{-}0 & \phantom{-}0 &\\
    \end{bmatrix}
\end{gather*}
Since $Df_i$ are linearly independent, we have $Dx_0 = \lambda Df_1 + \mu Df_2 + \nu Df_3$ for some $\lambda,\, \mu,\, \nu \in ~\mathbb{C}$. This relation gives us the following system of equations:
\begin{equation*}
\begin{cases}
    x_1x_3 = x_4^2+x_5^2+x_6^2,\\
    x_1x_2 = x_5x_6,\\
    x_3^2 + x_2x_4= 0,\\
    \lambda x_2 + \mu x_3 +  \nu (x_1 + x_5 + x_6) = 1, \\
    \lambda x_3 + \mu x_2 = 0,\\
    \mu x_1 + \nu x_4 = 0, \\
    \lambda x_1 + 2\nu x_3 = 0, \\
    -2\lambda x_4 + \nu x_2 = 0, \\
    -2\lambda x_5 -\mu x_6 = 0, \\
    -2\lambda x_6 -\mu x_5 = 0.
    \end{cases}
\end{equation*}
Consider the ideal $\mathfrak{a} \subseteq \mathbb{C}[x_1, x_2, x_3, x_4, x_5, x_6, \lambda, \mu, \nu]$ generated by the polynomials of the system. A Gr\"obner basis for this ideal is computed in Comp.~\ref{fig:calc9}. In particular, 
\begin{equation*}
    \{x_3^3,\, x_4^4,\, x_5^4 ,\,x_6^4,\, x_1x_2 - x_5x_6\} \subseteq \mathfrak{a}.
\end{equation*}
This shows that $x_i = 0\,\,$ for $i = 3,\,\ldots,\,6$ and $x_1x_2 = 0$, and we get two singular points $p,\,q$:
\begin{align*}
    &p = (0:1:0:0:0:0:0), \quad \lambda = 0,\, \mu = 0,\, \nu = 1, \\
    &q = (0:0:1:0:0:0:0), \quad \lambda = 1,\, \mu = 0,\, \nu = 0.
\end{align*}
In the chart $U_1 = \{x_1 \neq 0\}$ we can substitute $x_3 = x_4^2 + x_5^2 + x_6^2$ and $x_2 = x_5x_6$ into the $f_3$ equation \eqref{sys50} to get the equation of $H$ near the point $p$:
\begin{equation*}
    x_4x_5x_6 + x_4^4 + x_5^4 + x_6^4 + 2(x_4^2x_5^2 + x_4^2x_6^2 + x_5^2x_6^2) = 0.
\end{equation*}
By Lemma \ref{lem:2.2}  $p$ is a singularity of the type $T_{4,4,4}$. In the chart $U_2 = \{x_2 \neq 0\}$ we can substitute $x_1 = x_5x_6$ and $x_4 = -x_3^2$ into $f_1$ to get the equation of $H$ near point $q$: 
\begin{equation*}
    x_5^2 + x_6^2 - x_3x_5x_6 + x_3^4 = 0.
\end{equation*}
Similarly to the proof of Lemma \ref{lem:2.2}, we can conclude that $q$ is an $A_3$ singularity. Hence the pair $(X_{2,2,2},\,H)$ is lc by inversion of adjunction. Since $H \in \left|-K_{X_{2,2,2}}\right|$, the pair is Calabi-Yau. Now  take the composition of blow ups as in the proof of the Proposition \ref{prop:5.5} to observe that $\dim \mathcal{D}(X_{2,2,2}, H) = 2$.
\end{proof}
\newpage
\begin{appendix}
\section{Gr\"obner basis computations}

\renewcommand{\figurename}{Computation}

All the computations were performed in Wolfram Mathematica.
\begin{figure}[h]
\begin{center}
\begin{verbatim}
    GroebnerBasis[
    {
    x3, 
    2*x0*x1^2*x2^2 + 2*x0*x1^4+5*x3*x0^4,
    2*x1*x0^2*x2^2 + 4*x1^3*x0^2 + 5*x3*x1^4,
    2*x2*x0^2*x1^2+6*x2^5,
    6*x3^5 + x0^5+x1^5
    },
    {x0,x1,x2,x3}
    ]
\end{verbatim}
\begin{align*}
    \mathfrak{a} + (x_3)= (&x_3,\, x_2^6,\, x_1^2x_2^5,\, x_1^6x_2^3,\, x_1^7x_2^2,\, 2x_1^8+x_1^6x_2^2,\, x_0x_1^4 + x_0x_1^2x_2^2, \\ &x_0^2x_1x_2^3-6x_1x_2^5,\, x_0^2x_1^2x_2+3x_2^5,\, 2x_0^2x_1^3+x_0^2x_1x_2^2, \\&-x_1^7x_2 + 3x_0^3x_2^5,\, x_0^5+x_1^5)
\end{align*}
\caption{}
\label{fig:calc1}
\end{center}
\end{figure}
\begin{figure}[h]
\begin{center}
\begin{verbatim}
    GroebnerBasis[
    {
    x3 - 1, 
    2*x0*x1^2*x2^2 + 2*x0*x1^4+5*x3*x0^4,
    2*x1*x0^2*x2^2 + 4*x1^3*x0^2 + 5*x3*x1^4,
    2*x2*x0^2*x1^2+6*x2^5,
    6*x3^5 + x0^5+x1^5
    },
    {x0,x1,x2,x3}
    ]
\end{verbatim}
\begin{align*}
    \mathfrak{a} + (x_3 - 1) = (1)
\end{align*}
\caption{}
\label{fig:calc2}
\end{center}
\end{figure}
\begin{figure}[h]
\begin{center}
\begin{verbatim}
    GroebnerBasis[
    {
    x4, 
    x1^3 + x2^3 + x1*x2*x3 + 3*x0^2*x4,
    3*x1^2*x0 + x0*x2*x3 + 6*x4*x1^2,
    3*x0*x2^2 + x0*x1*x3,
    4*x3^3 + x0*x1*x2,
    x0^3 + 2*x1^3 + 4*x4^3
    },
    {x0,x1,x2,x3}
    ]
\end{verbatim}
\begin{align*}
    \mathfrak{a} + (x_4)= (&x_4,\, x_3^4,\, x_2^2x_3^3,\, x_2^4x_3^2,\, x_2^5x_3,\, x_2^7,\, x_1x_2x_3^3,\, x_1x_2^3x_3^2,\, x_2^6 + x_1x_2^4x_3,\, x_1x_2^5,\, \\ & x_1^2x_3^3, 3x_2^5 + 4x_1x_2^3x_3 + x_1^2x_2x_3^2,\, 3x_1^2x_2^3 - 2x_2^4x_3-2x_1x_2^2x_3^2,\, \\ &x_1^3 + x_2^3 + x_1x_2x_3,\, x_0x_2x_3^2 + 36x_2x_3^3,\, x_0x_2^2x_3-12x_1x_3^3,\, x_0x_2^3,\, \\ & 3x_0x_2^2 + x_0x_1x_3,\, x_0x_1x_2 + 4x_3^3,\, 3x_0x_1^2 + x_0x_2x_3,\, \\ &x_1x_2^4 + x_1^2x_2^2x_3 + 2x_0^2x_3^3,\, x_0^3 - 2x_2^3 - 2 x_1x_2x_3)
\end{align*}
\caption{}
\label{fig:calc3}
\end{center}
\end{figure}
\begin{figure}[h]
\begin{center}
\begin{verbatim}
    GroebnerBasis[
    {
    x4 - 1, 
    x1^3 + x2^3 + x1*x2*x3 + 3*x0^2*x4,
    3*x1^2*x0 + x0*x2*x3 + 6*x4*x1^2,
    3*x0*x2^2 + x0*x1*x3,
    4*x3^3 + x0*x1*x2,
    x0^3 + 2*x1^3 + 4*x4^3
    },
    {x0,x1,x2,x3}
    ]
\end{verbatim}
\begin{equation*}
    \mathfrak{a} + (x_4 - 1) = (1)
\end{equation*}
\caption{}
\label{fig:calc4}
\end{center}
\end{figure}
\begin{figure}[h]
\begin{center}
\begin{verbatim}
    GroebnerBasis[
    {  
    x0^2 + x1*x2,
    x0^3 + x0*x1^2 + x1*x2^2 + x0*x2^2,
    2x0 - \[Lambda](3x0^2 + x1^2 + x2^2),
    x2 - \[Lambda]( 2x0*x1 + x2^2),
    x1 - 2\[Lambda]*x2*(x0 + x1)
    },
    {x0,x1,x2,\[Lambda]}
    ]
\end{verbatim}
\begin{equation*}
    \mathfrak{a} = (x_2,\, x_1,\, x_0)
\end{equation*}
\caption{}
\label{fig:calc5}
\end{center}
\end{figure}
\begin{figure}[h]
\begin{center}
\begin{verbatim}
    GroebnerBasis[
    {
    \[Lambda](x0^2 + x1*x2) - \[Lambda]*\[Alpha]1^2 - 4\[Alpha]1,
    \[Lambda]^3*(x0^3 + x0*x1^2 + x1*x2^2 + x0*x2^2) - 8,
    2x0 - \[Lambda]*(3x0^2 + x1^2 + x2^2),
    x2 - \[Lambda]*(2x0*x1 + x2^2),
    x1 - 2\[Lambda]*x2*(x0 + x1),
    2\[Alpha]1^2\[Lambda]^2 + 8\[Lambda]\[Alpha]1 - 24 
    },
    {x0,x1,x2,\[Lambda],\[Alpha]1}
    ]
\end{verbatim}
\begin{equation*}
    \mathfrak{b} = (1)
\end{equation*}
\caption{}
\label{fig:calc6}
\end{center}
\end{figure}
\begin{figure}[h]
\begin{center}
\begin{verbatim}
    GroebnerBasis[
    {
    x1*x2 - x3^2 - x4^2 - x5^2,
    x1*x2^2 + x3*x4*x5,
    \[Mu]*(x1^2 + x2^2) - 1,
    \[Lambda]*x2 + \[Mu]*(x2^2),
    \[Lambda]*x1 + 2*\[Mu]*x2*x1,
    -2*\[Lambda]*x3  + \[Mu]*x4*x5,
    -2*\[Lambda]*x4  + \[Mu]*x3*x5,
    -2*\[Lambda]*x5  + \[Mu]*x3*x4
    },
    {x1,x2,x3,x4,x5,\[Lambda],\[Mu]}
    ]
\end{verbatim}
\begin{align*}
    \mathfrak{a} = (&\lambda^4 - \lambda^2\mu,\, x_5\lambda^2,\, x_5^2\lambda,\, x_5^4,\, x_4\lambda^2,\, x_4x_5\lambda,\, x_4x_5^2,\, x_4^2\lambda,\, x_4^2x_5,\, x_4^4,\, \\ & 2x_3\lambda - x_4x_5\mu, -2x_4\lambda + x_3x_5\mu,\, x_3x_5^2,\, -2x_5\lambda + x_3x_4\mu, x_3x_4x_5,\, \\ &x_3x_4^2,\, x_3^2x_5,\, x_3^2x_4,\, x_3^4,\, \lambda^3 + \lambda\mu + 2x_2\mu^2,\, \lambda^2 + x_2\lambda\mu,\, -\lambda+x_2\lambda^2- \\ &- 2x_2\mu,\, x_3x_4 + 4x_2x_5,\, 4x_2x_4+x_3x_5,\,4x_2x_3+x_4x_5,\, x_2\lambda + x_2^2\mu,\, \\&x_2^2 + x_2^3\lambda,\, x_1\lambda + 2x_3^2\mu + 2x_4^2\mu + 2x_5^2\mu,\, x_3x_4 + 4x_1x_5^3\mu,\, 4x_3^3+x_1x_4x_5,\,\\& x_3x_5+4x_1x_4^3\mu,\, 4x_4^3+x_1x_3x_5,\, x_1x_3x_4 + 4x_5^3,\, -x_2-x_2^2\lambda + x_1x_3^2\mu + \\ &+x_1x_4^2\mu + x_1x_5^2\mu,\, x_1x_2-x_3^2-x_4^2-x_5^2,\, -1-x_2\lambda+x_1^2\mu)
\end{align*}
\caption{}
\label{fig:calc7}
\end{center}
\end{figure}
\begin{figure}[h]
\begin{center}
\begin{verbatim}
    GroebnerBasis[
    {
    x0*x2 + x1*x3 - x4^2 -x5^2 - x6^2,
    x0*x3 + x1*x2 - x5*x6,
    x3^2 + x2*x4 + x0*(2x0 + x1 + x5 + x6),
    4x0 + x1 + x5 + x6 - \[Lambda]*x2 -\[Mu]*x3,
    x0- \[Lambda]*x3 - \[Mu]*x2,
    x4  - \[Lambda]*x0 - \[Mu]*x1,
    2x3  - \[Lambda]*x1 - \[Mu]*x0,
    x2 +  2\[Lambda]*x4,
    x0 + 2\[Lambda]*x5 + \[Mu]*x6,
    x0 + 2\[Lambda]*x6 + \[Mu]*x5
    },
    {x0,x1,x2,x3,x4,x5,x6,\[Lambda],\[Mu]},
    MonomialOrder -> DegreeReverseLexicographic
    ]
\end{verbatim}
\begin{align*}
    \mathfrak{a} = (&x_0 + 2x_6\lambda + x_5\mu,\, x_0 + 2x_5\lambda + x_6\mu,\, x_2 + 2x_4\lambda,\, -x_0 + x_3\lambda  + x_2\mu,\, \\&-4x_0-x_1-x_5-x_6 + x_2\lambda  + x_3\mu,\, -2x_3 + x_1 \lambda  + x_0\mu,\, -x_4 + \\& + x_0\lambda  + x_1\mu,\, x_5x_6,\, x_5^2 + x_6^2,\, x_4x_5 + x_4x_6,\, x_3x_5 + x_3x_6,\, x_2x_5 + x_2x_6,\, \\&x_1x_5 + x_1x_6,\, x_0x_5 + x_0x_6,\, x_4^2,\, x_3x_4,\, x_2x_4,\, x_1x_4,\, x_0x_4,\, x_3^2,\, x_2x_3,\, x_1x_3,\, \\&x_0x_3,\, x_2^2,\, x_1x_2,\, x_0x_2,\, x_1^2,\, x_0x_1,\, x_0^2,\, x_0x_6 + x_6^2\mu,\,x_2x_6 + x_4x_6\mu,\, \\&-14x_0x_6 - 4x_1x_6 - 4x_6^2 + 3x_3x_6\mu,\, 4x_0x_6 + 2x_1x_6 + 2 x_6^2 + 3x_2x_6\mu,\, \\&4x_3x_6 - 4x_4x_6 + 3x_1x_6\mu,\, -8x_3x_6 + 2x_4x_6 + 3x_0x_6\mu,\, x_6^3,\, x_4x_6^2,\, x_3x_6^2,\, \\&x_2x_6^2,\, x_1x_6^2,\, x_0x_6^2)
\end{align*}
\caption{}
\label{fig:calc8}
\end{center}
\end{figure}
\begin{figure}[h]
\begin{center}
\begin{verbatim}
    GroebnerBasis[
    {
    x1*x3 - x4^2 -x5^2 - x6^2,
    x1*x2 - x5*x6,
    x3^2 + x2*x4 ,
    \[Nu]*(x1 + x5 + x6) + \[Lambda]*x2 +\[Mu]*x3 - 1,
    \[Lambda]*x3 + \[Mu]*x2,
    \[Nu]*x4  + \[Mu]*x1,
    \[Nu]*2*x3  + \[Lambda]*x1 ,
    \[Nu]*x2 -  2*\[Lambda]*x4,
    -2*\[Lambda]*x5 - \[Mu]*x6,
    -2*\[Lambda]*x6 - \[Mu]*x5
    },
    {x1,x2,x3,x4,x5,x6,\[Lambda],\[Mu],\[Nu]}
    ]
\end{verbatim}
\begin{align*}
    \mathfrak{a} = (&\mu^4,\, 2\mu^3 + \lambda\mu\nu,\, \lambda\mu^3,\, 2\lambda\mu^2 + \lambda^2\nu,\, x_6\mu^2,\, x_6\lambda\mu,\, x_6\lambda^2,\, x_6^2\mu,\, x_6^2\lambda,\, \\&x_6\lambda + 2x_6^3\nu^3,\, x_6^4,\, -2x_6\lambda + x_5\mu,\, 2x_5\lambda - x_6\mu,\, -4\mu^3 + x_5x_6\nu^5,\, \\&x_5x_6^2,\, 2\mu^2 + \lambda\nu - 2x_6\lambda\nu^2 - x_6\mu\nu^2 + 2x_5^2\nu^4 + 2x_6^2\nu^4,\, x_5^2x_6,\, \\&x_6\mu + 4x_5^3\nu^3,\, x_5^4,\, \mu\nu-2x_6\lambda\nu^2 - x_6\mu\nu^2 + x_4\
    \nu^3,\, 4x_4\mu^2 + x_5x_6\nu^3,\, \\&2x_4\lambda\nu - x_5x_6\nu^3,\, x_4\lambda\mu,\, x_4\lambda^3 + \mu^2 + x_4\mu\nu^2,\, x_4x_6 - 4x_5^3\mu,\,\\& x_4x_5 - 4x_6^3\nu,\, x_4\mu + x_4^2\nu^2,\, 4x_4^2\mu - x_5x_6\nu,\, x_4^2\lambda,\, x_5x_6 + 4x_4^3\nu,\, x_4^4,\, \\&2x_6\lambda + 4x_4\mu + x_6\mu + 4x_3\nu - 4x_5^2\nu^2 - 4x_6^2\nu^2,\, -4x_4\lambda + 4x_3\mu + \\& + x_5x_6\nu^2,\, x_3\lambda^2 + \mu - 2x_6\lambda\nu - x_6\mu\nu + x_4\nu^2,\, x_3x_6 - x_6^3\nu,\, \\& x_3x_5 - x_5^3\nu,\, 4x_3x_4 + x_5x_6,\, x_3^3,\, -2x_4\lambda + x_2\nu,\, x_3\lambda + x_2\mu,\, \\&-\lambda + x_2\lambda^2 + x_4\lambda^2 + 2 x_6\lambda\nu + 2 x_4\mu\nu + x_6\mu\nu - 2 x_5^2\nu^3 - 2x_6^2\nu^3,\, \\& x_2x_6,\, x_2x_5,\, x_3^2 + x_2x_4,\, -x_3 + x_2x_3\lambda + x_4^2\nu + x_5^2\nu + x_6^2\nu + x_5^3\nu^2 + \\& + x_6^3\nu^2,\, -x_2 + x_2^2\lambda - x_3^2\lambda + x_5x_6\nu,\, -4 + 4x_2\lambda + 4 x_4\lambda + 4x_1\nu + \\& + 4x_5\nu + 4x_6\nu - x_5x_6\nu^2,\,  
    x_1\mu + x_4\nu,\, 2x_1\lambda - 2x_6\lambda - 4x_4\mu - \\& - x_6\mu + 4x_5^2\nu^2 + 4x_6^2\nu^2,\, 4x_4^3 + x_1x_5x_6,\, x_1x_3 - x_4^2 - x_5^2 - x_6^2,\,\\& x_1x_2 - x_5x_6)
\end{align*}
\caption{}
\label{fig:calc9}
\end{center}
\end{figure}
\end{appendix}
\clearpage

\end{document}